\documentclass[10pt, a4paper]{article}
\usepackage[a4paper, total={6in, 9in}]{geometry}

\pdfoutput=1

\usepackage{algorithm}
\usepackage{algorithmic}
\usepackage[vlined,lines numbered,ruled,algo2e]{algorithm2e}
\usepackage{amsmath}
\usepackage{amssymb}
\usepackage{amsthm}
\usepackage{epsfig}
\usepackage{amsfonts}
\usepackage{geometry}
\usepackage{hyperref}
\usepackage{url}
\usepackage{array}
\usepackage{color}
\usepackage{gauss} 
\usepackage{graphicx}
\usepackage{subfig}
\usepackage{grffile}
\usepackage{enumerate}
\usepackage{tikz}
\usetikzlibrary{patterns}

\newtheorem{theorem}{Theorem}[section]
\newtheorem{lemma}[theorem]{Lemma}
\newtheorem{remark}[theorem]{Remark}

\newcommand{\nb}{{\mathsf{nb}}}

\newcommand{\R}{{\mathbb R}}

\newcommand{\blackx}{\mathsf x}
\newcommand{\blueo}{\textcolor{blue}{\mathsf o}}
\newcommand{\blacko}{\mathsf o}
\newcommand{\redx}{\textcolor{red}{\mathsf x}}

\let\oldparagraph\paragraph
\renewcommand\paragraph[1]{\oldparagraph{#1.}}

\newcommand{\change}[1]{{#1}}
\newcommand{\newchange}[1]{{#1}}

\newcommand{\TheTitle}{A Householder-based algorithm for Hessenberg-triangular reduction} 

\begin{document}

\title{{\TheTitle}\thanks{
      \change{The first author} has received financial support from the SNSF research project \emph{Low-rank updates of matrix functions and fast eigenvalue solvers} and the Croatian Science Foundation grant HRZZ-9345.
      \change{The second author} has received financial support from the European Union's Horizon 2020 research and innovation programme under the NLAFET grant agreement No~671633.
    }}

\author{
  Zvonimir Bujanovi\'{c}\thanks{Department of Mathematics, Faculty of Science, University of Zagreb, Zagreb, Croatia (\href{mailto:zbujanov@math.hr}{zbujanov@math.hr}).}
  \and
  Lars Karlsson\thanks{Department of Computing Science, Ume\aa{} University, Ume\aa{}, Sweden (\href{mailto:larsk@cs.umu.se}{larsk@cs.umu.se}).}
  \and
  Daniel Kressner\thanks{Institute of Mathematics, EPFL, Lausanne, Switzerland (\href{mailto:daniel.kressner@epfl.ch}{daniel.kressner@epfl.ch}.}
}

\date{} 


\maketitle

\begin{abstract}
  The QZ algorithm for computing eigenvalues and eigenvectors of a matrix pencil $A - \lambda B$ requires that the matrices first be reduced to Hessenberg-triangular (HT) form.
  The current method of choice for HT reduction relies entirely on Givens rotations \change{regrouped and} accumulated into small dense matrices which are subsequently applied using matrix multiplication routines.
  A non-vanishing fraction of the total flop count must nevertheless still be performed as sequences of overlapping Givens rotations \change{alternately} applied from the left and from the right.
  The many data dependencies associated with this computational pattern leads to inefficient use of the processor and \change{poor scalability}.
  In this paper, we therefore introduce a fundamentally different approach that relies entirely on (large) Householder reflectors partially accumulated into \change{block reflectors, by using} (compact) WY representations.
  Even though the new algorithm requires more floating point operations than the state of the art algorithm, extensive experiments on both real and synthetic data indicate that it is still competitive, even in a sequential setting.
  The new algorithm is conjectured to have better parallel scalability, an idea which is partially supported by early small-scale experiments using multi-threaded BLAS.
  The design and evaluation of a parallel formulation is future work.
\end{abstract}


\section{Introduction}

Given two matrices $A,B \in \R^{n\times n}$ the QZ algorithm proposed by Moler and Stewart~\cite{Moler1973} for computing eigenvalues and eigenvectors of the matrix pencil $A-\lambda B$ consists of three steps.
First, a QR or an RQ factorization is performed to reduce $B$ to triangular form.
Second, a Hessenberg-triangular (HT) reduction is performed, that is, orthogonal matrices $Q,Z \in \R^{n\times n}$ \change{are found} such that $H = Q^T A Z$ is in Hessenberg form (all entries below the sub-diagonal are zero) while $T = Q^T B Z$, \change{like $B$, is} in upper triangular form.
Third, $H$ is iteratively (and approximately) reduced further to quasi-triangular form, \change{allowing easier determination of} the eigenvalues of $A-\lambda B$ and associated quantities.

During the last decade, significant progress has been made \change{in speeding up} the third step, i.e., the iterative part of the QZ algorithm.
Its convergence has been accelerated by extending aggressive early deflation from the \change{QR algorithm~\cite{Braman2002a}} to the QZ algorithm~\cite{Kagstrom2006}.
Moreover, multi-shift techniques make sequential~\cite{Kagstrom2006} as well as parallel~\cite{Adlerborn2014} implementations perform well.

\change{As a} consequence of the improvements in the iterative part, the \change{HT reduction} of the matrix pencil has become \change{even more} critical to the performance of the QZ algorithm.
We mention in passing that this reduction also plays a role in aggressive early deflation and may thus become critical to the iterative part as well, at least in a parallel implementation~\cite{Adlerborn2014,Granat2010}.
The original algorithm for HT reduction from~\cite{Moler1973} reduces $A$ to Hessenberg form (and maintains $B$ in triangular form) by performing $\Theta(n^2)$ Givens rotations.
Even though progress has been made in~\cite{Kagstrom2008} to accumulate these Givens rotations and apply them more efficiently using matrix multiplication, the need for propagating sequences of rotations through the triangular matrix $B$ makes the sequential---but even more so the parallel---\change{implementations perform far below the peak rate of the machine.}

A general idea in dense eigenvalue solvers to speed up the \change{HT reduction} is to perform it in two (or more) stages.
For a single \emph{symmetric} matrix $A$, this idea amounts to reducing $A$ to banded form in the first stage and then further to tridiagonal form in the second stage.
Usually called successive band reduction~\cite{Bischof2000}, this currently appears to be the method of choice for tridiagonal reduction; see, e.g.,~\cite{Auckenthaler2011a,Bientinesi2011,Haidar2011,Haidar2013}.
However, this success story does not seem to carry over to the non-symmetric case, possibly because the second stage (reduction from block Hessenberg to Hessenberg form) is always an $\Omega(n^3)$ operation and hard to execute efficiently; see~\cite{Karlsson11a,Karlsson12} for some recent but limited progress.
The situation is certainly not simpler when reducing a matrix pencil $A -\lambda B$ to HT form~\cite{Kagstrom2008}. 

For the reduction of a single non-symmetric matrix to Hessenberg form, the classical Householder-based algorithm \cite{Dongarra1989,Quintana-Orti2006} remains the method of choice.
This is despite the fact that not all of its operations can be blocked, that is, a non-vanishing fraction of level~2 BLAS remains (approximately $20\%$ in the form of one matrix--vector multiplication \change{per column} involving the unreduced part).
Extending the use of (long) Householder reflectors (instead of Givens rotations) to HT reduction of a matrix pencil gives rise to a number of issues.
The aim of this paper is to describe how to satisfactorily address all of these issues.
We do so by combining an unconventional use of Householder reflectors with blocked updates of RQ decompositions.
We see the resulting Householder-based algorithm for HT reduction as a first step towards an algorithm that is more suitable for parallelization.
We provide some evidence in this direction, but the parallelization itself is out of scope and is deferred to future work.

The rest of this paper is organized as follows.
In Section~\ref{sec:prelims}, we recall the notions of (opposite) Householder reflectors and (compact) WY representations and their stability properties.
The new algorithm is described in Section~\ref{sec:algorithm} and numerical experiments are presented in Section~\ref{sec:experiments}.
The paper ends with conclusions and future work in Section~\ref{sec:conclusions}.


\section{Preliminaries} \label{sec:prelims}

We recall the concepts of Householder reflectors \change{and the lesser known} opposite Householder reflectors, iterative refinement, and regular as well as compact WY representations.
These concepts are the main building blocks of the new algorithm.

\subsection{Householder reflectors}
\label{sec:conv-hous-refl}

We recall that an $n\times n$ Householder reflector takes the form
\[
 H = I - \beta v v^T, \qquad \beta = \frac{2}{v^T v}, \qquad v \in \R^n, \qquad \change{v\not=0,}
\]
where $I$ denotes the ($n\times n$) identity matrix.
Given a vector $x \in \R^n$, one can always choose \change{$v\not=0$} such that $Hx = \pm \|x\|_2 e_1$, with \change{$e_1$ being the first unit vector}; see~\cite[Sec.~5.1.2]{Golub2013} for details. 

Householder reflectors are orthogonal (and symmetric) and they represent one of the most common means to zero out entries in a matrix in a numerically stable fashion.
For example, by choosing $x$ to be the first column of an $n\times n$ matrix $A$, the application of $H$ from the left to $A$ reduces the first column of $A$, that is, the trailing $n-1$ entries in the first column of $HA$ are zero.

\subsection{Opposite Householder reflectors}
\label{sec:unconv-hous-refl}

What is less commonly known, and was possibly first noted in~\cite{Watkins2000}, is that Householder reflectors can be used in the opposite way, that is, a reflector can be applied \emph{from the right} to reduce a \emph{column} of a matrix.
To \change{illustrate the principle of constructing such opposite Householder reflectors}, let $B \in \R^{n\times n}$ be invertible and choose $x = B^{-1} e_1$.
Then the corresponding Householder reflector $H$ that reduces $x$ satisfies
\[
 (H B^{-1}) e_1 =\pm \| B^{-1} e_1\|_2 e_1 \qquad \Rightarrow \qquad (B H) e_1 = \pm \frac{1}{\| B^{-1} e_1\|_2} e_1.
\]
In other words, a reflector that reduces the first column of $B^{-1}$ \emph{from the left} (as in $H B^{-1}$) also reduces the first column of $B$ \emph{from the right} (as in $B H$).
\newchange{The following lemma is an extension of~\cite[Sec.~2.2]{Kagstrom2006}, and it provides an error analysis of opposite Householder reflectors. Note that the analysis allows for the inexact solution of $Bx = e_1$ and does not require $B$ to be invertible.}
\change{
\begin{lemma} \label{lemma:analysisopposite}
Let $\hat x \in \R^n$ satisfy $(B+\triangle) \hat x = e_1$ for matrices $B, \triangle \in \R^{n\times n}$. Consider the following procedure:
\begin{enumerate}
 \item Using~\cite[Alg.~5.1.1]{Golub2013}, compute coefficients $v,\beta$ of the Householder reflector $H = I - \beta v v^T$ such that $H\hat x = \pm \|\hat x\|_2 e_1$.
 \item Compute $C = B - (B v) (\beta v)^T$.
 \item Set entries $C_{2:n,1} \gets 0$.
\end{enumerate}
If this procedure is carried out in floating point arithmetic according to the standard model~\cite[Eq.~(2.4)]{Higham2002} then the computed output $\hat C$ satisfies
\[
 \hat C = (B+\hat \triangle )H, \qquad \|\hat \triangle\|_F \le \|\triangle\|_F + cn \mathrm u \|B\|_F,
\]
provided that $n \newchange{\mathrm u} \ll 1$, where $\mathrm u$ denotes the unit roundoff and $c$ is a small constant.
\end{lemma}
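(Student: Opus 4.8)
The plan is to track the rounding errors through the three steps of the procedure and absorb them all into a single backward perturbation $\hat\triangle$ of $B$. The key structural observation is that step~3 (explicitly zeroing the subdiagonal of the first column) is exactly what makes the analysis clean: because $H$ was constructed so that $H\hat x = \pm\|\hat x\|_2 e_1$, the \emph{exact} product $BH$ would have first column $BH e_1$, and one checks using $(B+\triangle)\hat x = e_1$ that this column is already a multiple of $e_1$ up to the term $\triangle$. So the zeroing in step~3 does not introduce a genuinely new error; it merely discards quantities that are of the size already accounted for by $\triangle$, plus rounding noise. I would make this precise first, on paper, before touching floating-point.

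Next I would invoke the standard backward-error results for the two genuinely numerical substeps. For step~1, \cite[Alg.~5.1.1]{Golub2013} (or the discussion in \cite[Ch.~19]{Higham2002}) gives computed $\hat v,\hat\beta$ defining an \emph{exactly orthogonal} reflector $\hat H = I - \hat\beta\hat v\hat v^T$ that, when applied to $\hat x$, annihilates the trailing entries up to a relative backward perturbation of size $O(n\mathrm u)\|\hat x\|_2$; the crucial point is that we may take the \emph{realized} reflector $\hat H$ to be exactly orthogonal, so no loss of orthogonality enters $\hat C = (B+\hat\triangle)H$. For step~2, forming $Bv$ and then the rank-one update $C = B - (Bv)(\beta v)^T$ is a matrix--vector product followed by an outer-product update, each of which satisfies the standard model with an error bounded by $O(n\mathrm u)$ times the norms of the operands; since $\|Bv\|_F \le \|B\|_F\|v\|_2$ and $\|H\|_2 = 1$, every such term is bounded by $c'n\mathrm u\|B\|_F$. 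Collecting: the computed $\hat C$ equals $B\hat H$ plus a perturbation of Frobenius norm at most $c''n\mathrm u\|B\|_F$ coming from steps~1--2, plus the step~3 zeroing which (by the structural observation) contributes the $\|\triangle\|_F$ term together with more $O(n\mathrm u\|B\|_F)$ noise. Writing $\hat C = (B+\hat\triangle)\hat H$ and using orthogonality of $\hat H$ to move the perturbation to the left gives $\|\hat\triangle\|_F \le \|\triangle\|_F + cn\mathrm u\|B\|_F$ as claimed, the hypothesis $n\mathrm u \ll 1$ being used to linearize and to guarantee the geometric-series-type bounds on products of $(1+\mathrm u)$ factors.

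The step I expect to be the main obstacle is the bookkeeping in step~3: one must argue carefully that after forming $\hat C$ its first column, \emph{before} zeroing, already agrees with a multiple of $e_1$ to within $\|\triangle\|_F + O(n\mathrm u\|B\|_F)$, so that overwriting it with the exact zero pattern is a backward-stable operation rather than an uncontrolled truncation. This uses the identity $BHe_1 = B\hat x/\|\hat x\|_2 \cdot(\pm 1) = (e_1 - \triangle\hat x)/\|\hat x\|_2\cdot(\pm1)$, i.e. the deviation from a multiple of $e_1$ is exactly $\triangle\hat x/\|\hat x\|_2$ in norm at most $\|\triangle\|_F$ (since $\|\hat x\|_2 \ge \|\hat x\|_2$, and $\|\triangle\hat x\|_2/\|\hat x\|_2 \le \|\triangle\|_F$); the computed $\hat C$ differs from this exact column by the step~1--2 rounding terms only. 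Everything else is a routine application of the standard model, and I would not grind through the explicit constants, noting only that $c$ can be taken to depend on the constants hidden in the cited backward-error bounds for Householder generation and for matrix--vector and outer-product updates.
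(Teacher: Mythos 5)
Your proposal is correct and follows essentially the same route as the paper's proof: invoke the standard backward-error result (Higham, Lemma~19.2) for the reflector generation and rank-one update, then use the identity $He_1 = \pm \hat x/\|\hat x\|_2$ together with $(B+\triangle)\hat x = e_1$ to show that the first column before zeroing already deviates from a multiple of $e_1$ by at most $\|\triangle\|_F$ plus rounding noise, so the truncation in Step~3 is a controlled backward perturbation. (Minor slip only: the inequality you meant is $\|\triangle\hat x\|_2/\|\hat x\|_2 \le \|\triangle\|_2 \le \|\triangle\|_F$, not ``$\|\hat x\|_2 \ge \|\hat x\|_2$''.)
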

\begin{proof}
Let $\hat C_1$ denote the computed matrix $C$ after Step 2 of the procedure. By~\cite[Lemma 19.2]{Higham2002}, we have
\begin{equation} \label{eq:hatc0}
 \hat C_1 = (B+\hat \triangle_1) H, \qquad \|\hat \triangle_1\|_F \le \overline cn\mathrm u \|B\|_F,
\end{equation}
for some small constant $\overline c$. It remains to analyze Step 3. For this purpose, we first note that -- by assumption -- $\hat x\not=0$ and thus
$He_1 = \pm \frac{1}{\|\hat x\|_2} H^2 \hat x = \pm \frac{\hat x}{\|\hat x\|_2}$. Inserted into~\eqref{eq:hatc0}, this gives
\begin{eqnarray*}
  \pm \hat C_1 e_1 &=& \pm (B+\hat \triangle_1 )H e_1 =  (B+\hat \triangle_1 )  \frac{\hat x}{\|\hat x\|_2} \\
  &=& (B+\triangle )  \frac{\hat x}{\|\hat x\|_2} + (\hat \triangle_1 - \triangle) \frac{\hat x}{\|\hat x\|_2} 
  = \frac{e_1}{\|\hat x\|_2} + (\hat \triangle_1 - \triangle) \frac{\hat x}{\|\hat x\|_2}.
\end{eqnarray*}
Hence, setting the entries to zero below the diagonal in the first column of $\hat C_1$ corresponds to $\hat C = \hat C_1 + \hat \triangle_2$ with $\|\hat \triangle_2\|_F \le \|\hat \triangle_1\|_F+ \|\triangle\|_F$. Setting $\hat \triangle = \hat \triangle_1 + \hat \triangle_2 H$ and $c = 2 \bar c$ completes the proof.
\end{proof}
}

\change{Lemma~\ref{lemma:analysisopposite} shows that opposite Householder reflectors are numerically backward stable provided that $\|\triangle\|_F$ is not much larger than $\mathrm u \|B\|_{F}$ or, in other words, $Bx = e_1$ is solved in a backward stable manner.}
\begin{remark} \label{rem:singularB}
	In~\cite{Kagstrom2006}, \change{it was explained how the case of a singular matrix $B$ can be addressed by using an RQ decomposition of $B$. In our setting, such an approach is not feasible because the matrix $B$ is usually not explicitly available. Lemma~\ref{lemma:analysisopposite} suggests an alternative approach.}
	To define the Householder reflector for a singular matrix $B$, we replace it by a non-singular matrix \change{$B_0 = B + \triangle_0$} with a perturbation \newchange{$\triangle_0$} of norm $\mathcal O(\mathrm{u})\|B\|_F$. Assuming that \change{$B_0 x = e_1$} is solved in a \change{backward stable manner, the condition of Lemma~\ref{lemma:analysisopposite} is still met with $\|\triangle\|_F = \mathcal O(\mathrm{u})\|B\|_F$.} 
  \change{Below, in Section~\ref{sec:firstcol}, we discuss our specific choice of $\triangle_0$.}
\end{remark}

\subsection{Iterative refinement}
\label{sec:iterative-refinement}

The algorithm we are about to introduce operates in a setting for which the solver for $B x = e_1$ is \emph{not always} guaranteed to be stable.
We will therefore use iterative refinement (see, e.g.,~\change{\cite[Ch.~12]{Higham2002}}) to refine a computed solution $\hat x$:
\begin{enumerate}
 \item Compute the residual $r = e_1 - B \hat x$.
 \item Test convergence: Stop if \change{$\|r\|_2 / \|\hat x\|_2  \le 2 \mathrm{u} \|B\|_F$}.
 \item Solve correction equation $Bc = r$ (with unstable method).
 \item Update $\hat x \gets \hat x + c$ and repeat from Step 1.
\end{enumerate}
\change{By setting $\newchange{\triangle} = r \hat x^T / \|\hat x\|_2^2$, one observes that the condition of Lemma~\ref{lemma:analysisopposite} is satisfied with $\|\newchange{\triangle}\|_F = 2 \mathrm{u} \|B\|_F$ upon successful completion of iterative refinement.}

\subsection{Regular and compact WY representations} \label{sec:compactWY}

Let $I-\beta_i v_i v_i^T$ for $i = 1, 2, \ldots, k$ be Householder reflectors with $\beta_i \in \R$ and $\change{0 \neq v_i} \in \R^n$, \change{such that the first $(i-1)$ entries of $v_i$ are zero}.
Setting \[V = [v_1,\ldots,v_k] \in \R^{n\times k},\] there is an upper triangular matrix $T \in \R^{k\times k}$ such that
\begin{equation} \label{eq:compactWY}
  \prod_{i=1}^{k} (I-\beta_i v_i v_i^T) = I - V T V^T.
\end{equation}
This so-called \emph{compact WY representation}~\cite{Schreiber1989} allows for applying Householder reflectors in terms of matrix--matrix products (level 3 BLAS).
The LAPACK routines {\tt DLARFT} and {\tt DLARFB} can be used to construct and apply compact WY representation, respectively.

\change{When the number of reflectors $k$ is close to their length $n$,} the factor $T$ in~\eqref{eq:compactWY} constitutes a non-negligible contribution to the overall cost of applying \change{Householder reflectors in this representation}, \change{even more so when there is an additional zero pattern in the matrix $VT^T$ (e.g., the last $(k-i)$ entries in $v_i$ are zero)}.
In these cases, we instead use a \emph{regular WY representation}~\cite[Method~2]{Bischof1987}, which takes the form $I - V W^{T}$ with $W = V T^{T}$.


\section{Algorithm} \label{sec:algorithm}

Throughout this section, which is devoted to the description of the new algorithm, we assume that $B$ has already been reduced to triangular form, e.g., by an RQ decomposition.
For simplicity, we will also assume that $B$ is non-singular (see Remark~\ref{rem:singularB} for how to eliminate this assumption). \change{The matrices $Q$ and $Z$, which will accumulate orthogonal transformations, are initialized to identity.}

\subsection{Overview} \label{sec:overview}

We first introduce the basic idea of the algorithm before going through most of the details.

The algorithm proceeds as follows.
The first column of $A$ is reduced below the first sub-diagonal by a conventional reflector from the left.
When this reflector is applied from the left to $B$, every column except the first fills in:
\[
 (A,B) \gets \small \left( \begin{bmatrix}
\blackx & \blackx &\blackx &\blackx &\blackx \\
\redx & \redx &\redx &\redx &\redx  \\
\blueo & \redx &\redx &\redx &\redx  \\
\blueo & \redx &\redx &\redx &\redx  \\
\blueo & \redx &\redx &\redx &\redx  
 \end{bmatrix}, \begin{bmatrix}
\blackx & \blackx &\blackx &\blackx &\blackx \\
\blacko & \redx &\redx &\redx &\redx  \\
\blacko & \redx &\redx &\redx &\redx  \\
\blacko & \redx &\redx &\redx &\redx  \\
\blacko & \redx &\redx &\redx &\redx  
 \end{bmatrix}\right).
\]
The second column of $B$ is reduced below the diagonal by an opposite reflector \emph{from the right}, as described in Section~\ref{sec:unconv-hous-refl}.
Note that the computation of this reflector requires the (stable) solution of a linear system involving the matrix $B$.
When the reflector is applied from the right to $A$, its first column is preserved:
\[
 (A,B) \gets \small \left( \begin{bmatrix}
\blackx & \redx &\redx &\redx &\redx \\
\blackx & \redx &\redx &\redx &\redx  \\
\blacko & \redx &\redx &\redx &\redx  \\
\blacko & \redx &\redx &\redx &\redx  \\
\blacko & \redx &\redx &\redx &\redx  
 \end{bmatrix}, \begin{bmatrix}
\blackx & \redx &\redx &\redx &\redx \\
\blacko & \redx &\redx &\redx &\redx  \\
\blacko & \blueo &\redx &\redx &\redx  \\
\blacko & \blueo &\redx &\redx &\redx  \\
\blacko & \blueo &\redx &\redx &\redx  
 \end{bmatrix}\right).
\]
Clearly, the idea can be repeated for the \emph{second} column of $A$ and the \emph{third} column of $B$, and so on:
\[
\small \left( \begin{bmatrix}
\blackx & \blackx  &\redx &\redx &\redx \\
\blackx & \blackx  &\redx &\redx &\redx  \\
\blacko & \redx &\redx &\redx &\redx  \\
\blacko & \blueo &\redx &\redx &\redx  \\
\blacko & \blueo &\redx &\redx &\redx  
 \end{bmatrix}, \begin{bmatrix}
\blackx & \blackx &\redx &\redx &\redx \\
\blacko & \blackx &\redx &\redx &\redx  \\
\blacko & \blacko &\redx &\redx &\redx  \\
\blacko & \blacko &\blueo &\redx &\redx  \\
\blacko & \blacko &\blueo &\redx &\redx  
 \end{bmatrix}\right), \ \ \small \left( \begin{bmatrix}
\blackx & \blackx  &\blackx &\redx &\redx \\
\blackx & \blackx  &\blackx &\redx &\redx  \\
\blacko & \blackx &\blackx &\redx &\redx  \\
\blacko & \blacko &\redx &\redx &\redx  \\
\blacko & \blacko &\blueo &\redx &\redx  
 \end{bmatrix}, \begin{bmatrix}
\blackx & \blackx &\blackx &\redx &\redx \\
\blacko & \blackx &\blackx &\redx &\redx  \\
\blacko & \blacko &\blackx &\redx &\redx  \\
\blacko & \blacko &\blacko &\redx &\redx  \\
\blacko & \blacko &\blacko &\blueo &\redx  
 \end{bmatrix}\right).
\]
After a total of $n - 2$ steps, the matrix $A$ will be in upper Hessenberg form and $B$ will be in upper triangular form, i.e., the reduction to Hessenberg-triangular form will be complete.
This is the gist of the new algorithm. 
The reduction is carried out by $n - 2$ conventional reflectors applied from the left to reduce columns of $A$ and $n - 2$ opposite reflectors applied from the right to reduce columns of $B$.

A naive implementation of the algorithm sketched above would require as many as $\Omega (n^{4})$ operations simply because each of the $n - 2$ iterations requires the solution of a dense linear system with the unreduced part of $B$, whose size is roughly $n / 2$ on average.
In addition to this unfavorable complexity, the arithmetic intensity \change{(i.e., the flop-to-memory-reference ratio)} of the $\Theta (n^{3})$ flops associated with the application of individual reflectors will be very low.
The following two ingredients aim at addressing both of these issues:
\begin{enumerate}
\item
  The arithmetic intensity is increased for a majority of the flops associated with the application of reflectors by performing the reduction in \emph{panels} (i.e., a small number of consecutive columns), delaying some of the updates, and using compact WY representations.
  The details resemble the blocked algorithm for Hessenberg reduction~\cite{Dongarra1989,Quintana-Orti2006}.
  
\item
  To reduce the complexity from $\Theta (n^{4})$ to $\Theta (n^{3})$, we avoid applying reflectors directly to $B$.
  Instead, we keep $B$ in factored form during the reduction of a panel:
  \begin{equation}
    \label{eq:tildeB}
    \tilde B = ( I - U S U^{T} )^{T} B ( I - V T V^{T} ).  
  \end{equation}
  Since $B$ is triangular and the other factors are orthogonal, this reduces the cost for solving a system of equations with $\tilde B$ from $\Theta(n^{3})$ to $\Theta (n^{2})$.
  For reasons explained in Section~\ref{sec:columnj} below, this approach is \emph{not always} numerically backward stable.
  A fall-back mechanism is therefore necessary to guarantee stability; \change{the mechanism used in the new algorithm is described in the following sections. Moreover, iterative refinement is used to avoid triggering the fall-back mechanism in many cases. Numerical experiments show that the combination of iterative refinement and the fall-back mechanism typically only slightly degrades the performance of the algorithm, while keeping it provably stable.}
  After the reduction of a panel is completed, $\tilde B$ is returned to upper triangular form in an efficient manner.
\end{enumerate}

\subsection{Panel reduction}

Let us suppose that the first $s-1$ (with $0\le s-1 \le n-3$) columns of $A$ have already been reduced (and hence $s$ is the first unreduced column) and $B$ is in upper triangular form (i.e., not in factored form \eqref{eq:tildeB}).
The matrices $A$ and $B$ take the shapes depicted in Figure~\ref{fig:matrix-shapes} for $j = s$.
In the following, we describe a reflector-based algorithm that aims at reducing the panel containing the next $\nb$ unreduced columns of $A$.
The algorithmic parameter $\nb$ should be tuned to maximize performance (see also Section~\ref{sec:experiments} for the choice of $\nb$).

\begin{figure}[htb]
  \centering
  \begin{tikzpicture}
    [scale=0.4,
    y=-1cm]
    
    \begin{scope}
      \node [above] at (0.5,0) {$U$};
      \node [below] at (0.5,10) {$k$};
      \draw [|-|] (-2,0) -- (-2,10) node [midway,rotate=90,left,anchor=south] {$n$};
      \draw [|-|] (-0.5,0) -- (-0.5,2) node [midway,left,rotate=90,anchor=south] {$s$};
      \draw [|-|] (-0.5,2) -- (-0.5,10) node [midway,left,rotate=90,anchor=south] {$n - s$};
      \draw [] (0,2) -- (0,0) -- (1,0) -- (1,3);
      \draw [fill=red!20] (0,2) -- (1,3) -- (1,10) -- (0,10) -- cycle;

      \begin{scope}
        [xshift=2cm,yshift=-4.5cm]
        
        \node [above] at (0.5,0) {$S$};
        \node [below] at (0.5,1) {$k$};
        \draw [fill=red!20] (0,0) -- (1,0) -- (1,1) -- cycle;
      \end{scope}
    \end{scope}
    
    \begin{scope}[xshift=6cm]
      \node [above] at (0.5,0) {$V$};
      \node [below] at (0.5,10) {$k$};
      \draw [|-|] (-0.5,0) -- (-0.5,2) node [midway,left,rotate=90,anchor=south] {$s$};
      \draw [|-|] (-0.5,2) -- (-0.5,10) node [midway,left,rotate=90,anchor=south] {$n - s$};
      \draw [] (0,2) -- (0,0) -- (1,0) -- (1,3);
      \draw [fill=red!20] (0,2) -- (1,3) -- (1,10) -- (0,10) -- cycle;

      \begin{scope}
        [xshift=2cm,yshift=-4.5cm]
        
        \node [above] at (0.5,0) {$T$};
        \node [below] at (0.5,1) {$k$};
        \draw [fill=red!20] (0,0) -- (1,0) -- (1,1) -- cycle;
      \end{scope}
    \end{scope}

    \begin{scope}[xshift=12cm]
      \node [above] at (5,0) {$A$};
      \draw [|-|] (0,10.5) -- (3,10.5) node [midway,below] {$j - 1$};
      \draw [|-|] (3,10.5) -- (10,10.5) node [midway,below] {$n - j + 1$};
      \draw [fill=blue!20] (0,0) -- (10,0) -- (10,10) -- (3,10) -- (3,3) -- cycle;
      \draw [] (0,0.2) -- (3,3.2);
      \draw [dashed] (1.8,0) -- (1.8,8);
      \draw [|-|] (0,8) -- (1.8,8) node [midway,below] {\small $s - 1$};
      \draw [|-|] (1.8,8) -- (3,8) node [near start,below] {\small $k$};
    \end{scope}

    \begin{scope}
      [xshift=-2cm,yshift=-14cm]

      \node [above] at (5,0) {$B$};
      \draw [fill=green!20] (0,0) -- (10,0) -- (10,10) -- cycle;
    \end{scope}
    
    \begin{scope}
      [xshift=12cm,yshift=-14cm]

      \node [above] at (5,0) {$\tilde B = (I-USU^T)^TB(I-VTV^T)$};
      \draw [|-|] (0,10.5) -- (3.3,10.5) node [midway,below] {$j$};
      \draw [|-|] (3.3,10.5) -- (10,10.5) node [midway,below] {$n - j$};
      \draw [fill=green!20] (0,0) -- (10,0) -- (10,10) -- (3.3,10) -- (3.3,3.3) -- cycle;
    \end{scope}

  \end{tikzpicture}
  \caption{Illustration of the shapes and sizes of the matrices involved in the reduction of a panel at the \emph{beginning} of the $j$th step of the algorithm, where $j \in [s, s + \nb)$.}
  \label{fig:matrix-shapes}
\end{figure}
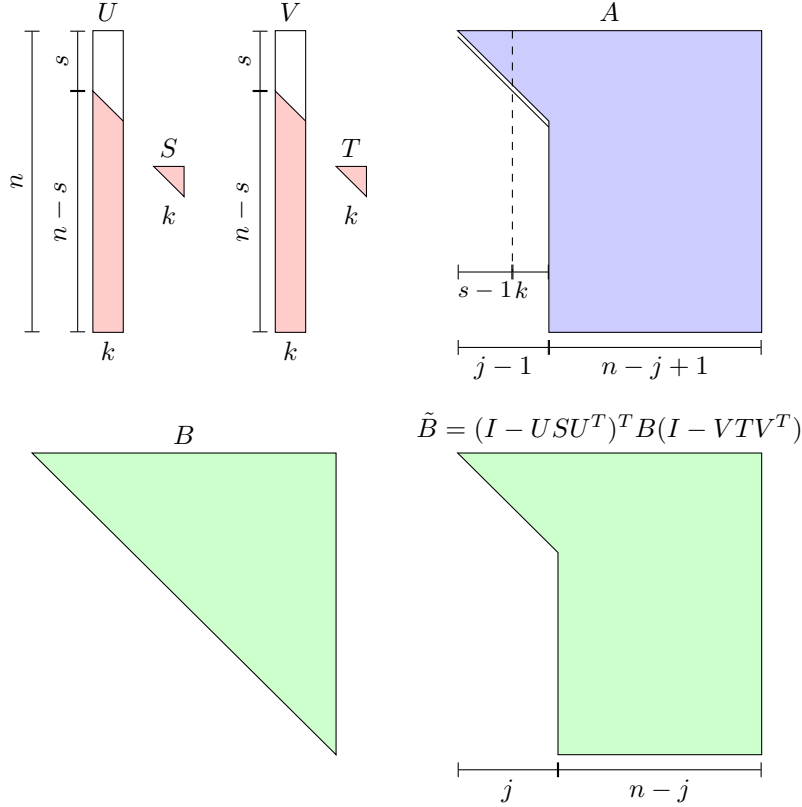

\subsubsection{Reduction of the first column ($j = s$) of a panel} \label{sec:firstcol}

In the first step of a panel reduction, a reflector $I - \beta uu^T$ is constructed to reduce column $j=s$ of $A$.
Except for entries in this particular column, no other entries of $A$ are updated at this point.
Note that the first $j$ entries of $u$ are zero and hence the first $j$ columns of $\tilde B = ( I - \beta u u^T ) B$ will remain in upper triangular form.
Now to reduce column $j+1$ of $\tilde B$, we need to solve, according to Section~\ref{sec:unconv-hous-refl}, the linear system
\[
 \tilde B_{j+1:n, j+1:n}x = \left( I - \beta u_{j+1:n} u_{j+1:n}^T \right) B_{j+1:n,j+1:n} x = e_1.
\]
The solution vector is given by
\begin{displaymath}
  x =
  B_{j+1:n,j+1:n}^{-1} \left( I - \beta u_{j+1:n} u_{j+1:n}^T \right) e_1 =
  B_{j+1:n,j+1:n}^{-1} \underbrace{\left( e_{1} - \beta u_{j+1:n} u_{j+1} \right)}_{y}.
\end{displaymath}
In other words, we first form the dense vector $y$ and then solve an upper triangular linear system with $y$ as the right-hand side. 
\change{If the matrix $B_{j+1:n,j+1:n}$ contains a zero on the diagonal, we replace the zero with $2\newchange{\mathrm u}\rho \|B\|_F$, where $\rho$ is a normally distributed random number. This way, as explained in Remark~\ref{rem:singularB},} both the formation of $y$ and the solution of the triangular system are backward stable~\change{\cite{Higham2002}} and hence \change{the condition of Lemma~\ref{lemma:analysisopposite} is satisified. In turn,}
the resulting Householder reflector $( I - \gamma v v^T )$ reliably yields a reduced $(j+1)$th column in $( I - \beta u u^T ) B ( I - \gamma v v^T )$.
We complete the reduction of the first column of the panel by initializing
\[
 U \gets u, \quad S \gets [\beta], \quad V \gets v, \quad T \gets [\gamma], \quad Y \gets \change{\gamma} Av.
\]
\change{The role of the matrix $Y$ will be to hold the product $Y=AVT$, similarly as in blocked algorithms for Hessenberg reduction~\cite{Dongarra1989,Quintana-Orti2006}; see also the LAPACK routine DGEHRD.}

\begin{remark} \label{rem:comp-Y}
  For simplicity, we assume that all rows of $Y$ are computed during the panel reduction.
  In practice, the first few rows of $Y = A V T$ are computed later on in a more efficient manner as described in~\cite{Quintana-Orti2006}.
\end{remark}

\subsubsection{Reduction of subsequent columns ($j>s$) of a panel} \label{sec:columnj}

We now describe the reduction of column $j \in (s, s + \nb)$, assuming that the previous $k = j - s \geq 1$ columns of the panel have already been reduced.
This situation is illustrated in Figure~\ref{fig:matrix-shapes}. 
At this point, $I-USU^T$ and $I-VTV^T$ are the compact WY representations of the $k$ previous reflectors from the left and the right, respectively.
The transformed matrix $\tilde B$ is available only in the factored form~\eqref{eq:tildeB}, with the upper triangular matrix $B$ remaining unmodified throughout the entire panel reduction.
Similarly, most of $A$ remains unmodified except for the reduced part of the panel.

\begin{paragraph}{a) Update column $j$ of $A$}
  To prepare its reduction, the $j$th column of $A$ is updated with respect to the $k$ previous reflectors:
  \begin{eqnarray*}
    A_{:,j} &\gets& A_{:,j} - Y V_{j,:}^{T}, \\
    A_{:,j} &\gets& A_{:,j} - U S^{T} U^{T} A_{:,j}.
  \end{eqnarray*}
  Note that \change{because of} Remark~\ref{rem:comp-Y}, actually only rows $s + 1 : n$ of $A$ need to be updated at this point.
\end{paragraph}

\begin{paragraph}{b) Reduce column $j$ of $A$ from the left}
  Construct a reflector $I - \beta u u^{T}$ such that it reduces the $j$th column of $A$ below the first sub-diagonal:
  \[
    A_{:,j} \gets (I - \beta u u^{T} ) A_{:,j}.
  \]
  The new reflector is absorbed into the compact WY representation by
  \begin{equation*}
    U \gets \begin{bmatrix} U & u \end{bmatrix}, \qquad S \gets \begin{bmatrix} S & - \beta S U^{T} u \\ 0 & \beta \end{bmatrix}.
  \end{equation*}
\end{paragraph}

\begin{paragraph}{c) Attempt to solve a linear system in order to reduce column $j + 1$ of $\tilde B$}
  This step aims at (implicitly) reducing the $(j+1)$th column of $\tilde B$ defined in~\eqref{eq:tildeB} by an opposite reflector from the right.
  As illustrated in Figure~\ref{fig:matrix-shapes}, $\tilde B$ is block upper triangular:
  \begin{displaymath}
    \tilde B =
    \begin{bmatrix}
      \tilde B_{11} & \tilde B_{12} \\
      0             & \tilde B_{22} \\
    \end{bmatrix}, \qquad \tilde B_{11} \in \R^{j\times j}, \qquad \tilde B_{22} \in \R^{(n-j)\times (n-j)}.
  \end{displaymath}
  To simplify the notation, the following description uses the full matrix $\tilde B$ whereas in practice we only need to work with the sub-matrix that is relevant for the reduction of the current panel, namely, $\tilde B_{s+1:n, s+1:n}$.

  According to Section~\ref{sec:unconv-hous-refl}, we need to solve the linear system
  \begin{equation}
    \label{eq:tildeB22x-eq-c}
    \tilde B_{22}x = c, \qquad c = e_{1}
  \end{equation}
  in order to determine an opposite reflector from the right that reduces the first column of $\tilde B_{22}$.
  However, because of the factored form~\eqref{eq:tildeB}, we do not have direct access to $\tilde B_{22}$ and we therefore instead work with the enlarged system
  \begin{equation} \label{eq:enlargedsystem}
    \tilde B y = 
    \begin{bmatrix}
      \tilde B_{11} & \tilde B_{12} \\
      0             & \tilde B_{22} \\
    \end{bmatrix}
    \begin{bmatrix}
      y_{1} \\
      y_{2} \\
    \end{bmatrix}
    =
    \begin{bmatrix}
      0 \\
      c \\
    \end{bmatrix}.
  \end{equation}
  From the enlarged solution vector $y$ we can extract the desired solution vector $x = y_{2} = \tilde B_{22}^{-1} c$. 
  By combining~\eqref{eq:tildeB} and the orthogonality of the factors with (\ref{eq:enlargedsystem}) we obtain
  \begin{displaymath}
    x = E^{T}( I - V T V^{T} )^{T} B^{-1} ( I - U S U^{T} )   \begin{bmatrix}
      0 \\
      c \\
    \end{bmatrix}, \quad \text{with} \quad E = 
    \begin{bmatrix}
      0 \\
      I_{n-j} \\
    \end{bmatrix}.
  \end{displaymath}
  We are lead to the following procedure for solving (\ref{eq:tildeB22x-eq-c}):
  \begin{enumerate}
  \item Compute $\tilde c \gets (I - U S U^{T}) \begin{bmatrix} 0 \\ c \end{bmatrix}$.
  \item Solve the triangular system $B \tilde y = \tilde c$ by backward substitution.
  \item Compute the enlarged solution vector $y \gets (I - V T V^{T})^{T} \tilde y$.
  \item Extract the desired solution vector $x \gets y_{j+1:n}$.
  \end{enumerate}
  While only requiring $\Theta(n^2)$ operations, this procedure is in general \emph{not} backward stable for $j > s$.
  When $\tilde B$ is significantly more ill-conditioned than $\tilde B_{22}$ alone, the intermediate vector $y$ (or, equivalently, $\tilde y$) may have a much larger norm than the desired solution vector $x$ leading to subtractive cancellation in the third step.
  As HT reduction has a tendency to move tiny entries on the diagonal of $B$ to the top left corner~\cite{Watkins2000}, we expect this instability to be more prevalent during the reduction of the first few panels (and this is indeed what we observe in the experiments in Section~\ref{sec:experiments}).

  To test backward stability of a computed solution $\hat x$ of (\ref{eq:tildeB22x-eq-c}) and perform iterative refinement, if needed, we compute the residual $r = c - \tilde B_{22} \hat x$ as follows:
  \begin{enumerate}
  \item Compute $w \gets (I - V T V^{T}) \begin{bmatrix} 0 \\ \hat x \end{bmatrix}$.
  \item Compute $w \gets B w$.
  \item Compute $w \gets (I - U S^{T} U^{T}) w$.
  \item Compute $r \gets c - w_{j+1:n}$.
  \end{enumerate}
  We perform the iterative refinement procedure described in Section~\ref{sec:iterative-refinement} as long as $\|r\|_2 \change{/ \|\hat x\|_{2}} > \mathsf{tol} = 2 \mathrm{u} \|B\|_F$ but abort after ten iterations.
  In the rare case when this procedure does not converge, we prematurely stop the current panel reduction and absorb the current set of reflectors as described in Section~\ref{sec:absorb} below.
  We then start over with a new panel reduction starting at column $j$.
  It is important to note that the algorithm is now \emph{guaranteed} to make progress since when $k = 0$ we have $\tilde B = B$ and therefore solving (\ref{eq:tildeB22x-eq-c}) is backward stable.
\end{paragraph}

\begin{paragraph}{d) Implicitly reduce column $j + 1$ of $\tilde B$ from the right}

  Assuming that the previous step computed an accurate solution vector $x$ to (\ref{eq:tildeB22x-eq-c}), we can continue with this step to complete the implicit reduction of column $j + 1$ of $\tilde B$.
  If the previous step failed, then we simply skip this step.
  A reflector $I - \gamma v v^{T}$ that reduces $x$ is constructed and absorbed into the compact WY representation as in
  \begin{displaymath}
    V \gets \begin{bmatrix} V & v \end{bmatrix}, \qquad T \gets \begin{bmatrix} T & - \gamma T V^{T} v \\ 0 & \gamma \end{bmatrix}.
  \end{displaymath}
  At the same time, a new column $y$ is appended to $Y$:
  \begin{displaymath}
    y \gets \gamma ( A v - Y V^{T} v ), \qquad Y \gets \begin{bmatrix} Y & y \end{bmatrix}.
  \end{displaymath}
  Note the common sub-expression $V^{T} v$ in the updates of $T$ and $Y$.
  Following Remark~\ref{rem:comp-Y}, the first $s$ rows of $Y$ are computed later in practice.
\end{paragraph}

\subsection{Absorption of reflectors}
\label{sec:absorb}

The panel reduction normally terminates after $k = \nb$ steps.
In the rare event that iterative refinement fails, the panel reduction will terminate prematurely after only $k \in [1, \nb)$ steps.
Let $k \in [1, \nb]$ denote the number of left and right reflectors accumulated during the panel reduction.
The aim of this section is to describe how the $k$ left and right reflectors are absorbed into $A$, $B$, $Q$, and $Z$ so that the next panel reduction is ready to start with $s \gets s + k$.

We recall that Figure~\ref{fig:matrix-shapes} illustrates the shapes of the matrices at this point.
The following facts are central:
\begin{enumerate}[{Fact} 1.]
\item \label{fact2} Reflector $i = 1, 2, \ldots, k$ affects entries $s+i:n$. In particular, entries $1:s$ are unaffected.
\item \label{fact4} The first $j - 1$ columns of $A$ have been updated and their rows $j+1:n$ are zero.
\item \label{fact5} The matrix $\tilde B$ is in upper triangular form in its first $j$ columns.
\end{enumerate}

In principle, it would be straightforward to apply the left reflectors to $A$ and $Q$ and the right reflectors to $A$ and $Z$.
The only complications arise from the need to preserve the triangular structure of $B$. 
To update $B$ one would need to perform a transformation of the form
\begin{equation} \label{eq:updateB}
  B \gets ( I - U S U^{T} )^{T} B ( I - V T V^{T} ).
\end{equation}
However, once this update is executed, the restoration of the triangular form (e.g., by an RQ decomposition) of \change{the now fully populated matrix} $B$  would have $\Theta(n^3)$ complexity, leading to an overall complexity of $\Theta(n^4)$.
In order to keep the complexity down, a very different approach is pursued:
\change{we will additionally transform $U$ and $V$ in a series of steps. Each step will introduce more zeros into $U$ and $V$, while making $B$ only block upper Hessenberg instead of full. This way, the final restoration of the triangular form of $B$ is significantly cheaper.}
In the following, we use the term \emph{absorption} (instead of updating) to emphasize the presence of these additional transformations, which affect $A$, $Q$, and $Z$ as well.
\change{To simplify the exposition, we assume that 
\newchange{$k$ divides $n-j$}, 
and comment on the general case in Remark~\ref{remark:notdivide} below.}

\subsubsection{Absorption of right reflectors} \label{sec:absorbright}

The aim of this section is to show how the right reflectors $I - V T V^{T}$ are absorbed into $A$, $B$, and $Z$ while (nearly) preserving the upper triangular structure of $B$.
When doing so we restrict ourselves to adding transformations only from the right due to the need to preserve the structure of the pending left reflectors, \change{i.e., $U$ and $S$ in~\eqref{eq:updateB} are not modified}.

\begin{paragraph}{a) Initial situation} We partition $V$ as 
$
  V = \small
  \begin{bmatrix}
    0 \\
    V_{1} \\
    V_{2} \\
  \end{bmatrix},
$
where $V_{1}$ is a lower triangular $k \times k$ matrix starting at row~$s + 1$ (Fact~\ref{fact2}).
Hence $V_{2}$ starts at row~$j + 1$ (recall that $k = j - s$).
Our initial aim is to absorb the update
\begin{equation} \label{eq:initright}
B \gets B(I - V T V^{T}) = B  \left( I -   \begin{bmatrix}
    0 \\
    V_{1} \\
    V_{2} \\
  \end{bmatrix} T
    \begin{bmatrix}
    0 &
    V^T_{1} &
    V^T_{2} 
  \end{bmatrix}
\right).
\end{equation}
The shapes of $B$ and $V$ are illustrated in Figure~\ref{fig:right-incorporate}~(a).
\end{paragraph}

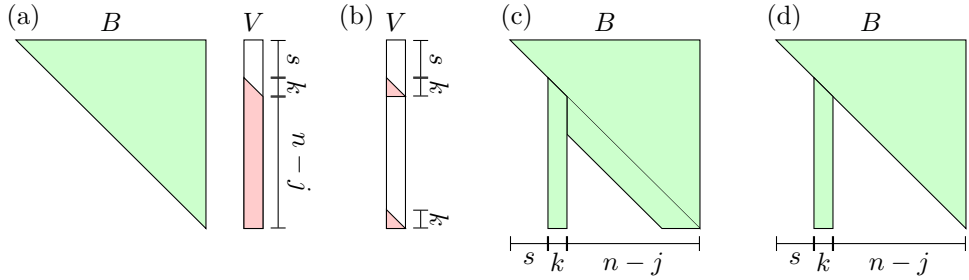
\begin{figure}[htb]
  \centering
  \begin{tikzpicture}
    [scale=0.25,y=-1cm]
    
    \begin{scope}
      \node [above right] at (-13,0) {(a)};
      \node [above] at (-7,0) {$B$};
      \node [above] at (0.5,0) {$V$};
      \fill [green!20] (-12,0) -- (-2,0) -- (-2,10) -- cycle;
      \draw (-12,0) -- (-2,0) -- (-2,10) -- cycle;

      \draw (0,0) rectangle (1,10);
      \fill [red!20] (0,2) -- (1,3) -- (1,10) -- (0,10) -- cycle;
      \draw (0,2) -- (1,3) -- (1,10) -- (0,10) -- cycle;
      \draw [|-|] (1.8,0) -- (1.8,2) node [midway,right,rotate=-90,anchor=south] {$s$};
      \draw [|-|] (1.8,2) -- (1.8,3) node [midway,right,rotate=-90,anchor=south] {$k$};
      \draw [|-|] (1.8,3) -- (1.8,10) node [midway,right,rotate=-90,anchor=south] {$n - j$};
    \end{scope}
    
     \begin{scope}[yshift=0cm, xshift=7.5cm]
      \node [above right] at (-3,0) {(b)};
      \node [above] at (0.5,0) {$V$};
      
      \draw (0,0) rectangle (1,10);
      \fill [red!20] (0,2) -- (1,3) -- (0,3) -- cycle;
      \draw [|-|] (0,2) -- (1,3) -- (0,3) -- cycle;
      \fill [red!20] (0,9) -- (1,10) -- (0,10) -- cycle;
      \draw [|-|] (0,9) -- (1,10) -- (0,10) -- cycle;
      \draw [|-|] (1.8,0) -- (1.8,2) node [midway,right,rotate=-90,anchor=south] {$s$};
      \draw [|-|] (1.8,2) -- (1.8,3) node [midway,right,rotate=-90,anchor=south] {$k$};
      \draw [|-|] (1.8,9) -- (1.8,10) node [midway,right,rotate=-90,anchor=south] {$k$};
    \end{scope}

    \begin{scope}
      [yshift=0cm, xshift=26cm]
      \node [above right] at (-13,0) {(c)};
      \node [above] at (-7,0) {$B$};

      \fill [green!20] (-12,0) -- (-2,0) -- (-2,10) -- cycle;
      \draw (-12,0) -- (-2,0) -- (-2,10) -- cycle;
      \fill [green!20] (-9,3) -- (-9,5) -- (-4,10) -- (-2,10) -- cycle;
      \draw (-9,3) -- (-9,5) -- (-4,10) -- (-2,10);
      \fill [green!20] (-10,2) -- (-10,10) -- (-9,10) -- (-9,3) -- cycle;
      \draw (-10,2) -- (-10,10) -- (-9,10) -- (-9,3) -- cycle;

      \draw [|-|] (-12,10.8) -- (-10,10.8) node [midway,below] {$s$};
      \draw [|-|] (-10,10.8) -- (-9,10.8) node [midway,below] {$k$};
      \draw [|-|] (-9,10.8) -- (-2,10.8) node [midway,below] {$n - j$};

    \end{scope}

    \begin{scope}
      [yshift=0cm, xshift=40cm]
      \node [above right] at (-13,0) {(d)};
      \node [above] at (-7,0) {$B$};

      \fill [green!20] (-12,0) -- (-2,0) -- (-2,10) -- cycle;
      \draw (-12,0) -- (-2,0) -- (-2,10) -- cycle;
      \fill [green!20] (-10,2) -- (-10,10) -- (-9,10) -- (-9,3) -- cycle;
      \draw (-10,2) -- (-10,10) -- (-9,10) -- (-9,3) -- cycle;
      \draw [|-|] (-12,10.8) -- (-10,10.8) node [midway,below] {$s$};
      \draw [|-|] (-10,10.8) -- (-9,10.8) node [midway,below] {$k$};
      \draw [|-|] (-9,10.8) -- (-2,10.8) node [midway,below] {$n - j$};
    \end{scope}
  \end{tikzpicture}
  \caption{Illustration of the shapes of $B$ and $V$ when absorbing right reflectors into $B$: (a) initial situation, (b) after reduction of $V$, (c) after applying orthogonal transformations to $B$, (d) after partially restoring $B$.}
  \label{fig:right-incorporate}
\end{figure}

\begin{paragraph}{b) Reduce $V$} 

  We reduce the $(n-j) \times k$ matrix $V_2$ to lower triangular from via a sequence of QL decompositions from top to bottom.
  For this purpose, a QL decomposition of rows $1,\ldots,2k$ is computed, then a QL decomposition of rows $k+1,\ldots,3k$, etc.
  After a total of $r \change{ = } (n-j-k)/k$ such steps,  we arrive at the desired form:
\[
\left[\arraycolsep=1.4pt\renewcommand{\arraystretch}{0.6} 
\small \begin{array}{ccc}
\blackx & \blackx & \blackx \\
\blackx & \blackx & \blackx \\
\blackx & \blackx & \blackx \\ \hline  \\[-6pt]
\blackx & \blackx & \blackx \\
\blackx & \blackx & \blackx \\
\blackx & \blackx & \blackx \\ \hline  \\[-6pt]
\blackx & \blackx & \blackx \\
\blackx & \blackx & \blackx \\
\blackx & \blackx & \blackx \\ \hline  \\[-6pt]
\blackx & \blackx & \blackx \\
\blackx & \blackx & \blackx \\
\blackx & \blackx & \blackx \\ \hline  \\[-6pt]
\blackx & \blackx & \blackx \\
\blackx & \blackx & \blackx \\
\blackx & \blackx & \blackx 
\end{array}
\right] \stackrel{\hat Q_1}{\longrightarrow}
\left[\arraycolsep=1.4pt\renewcommand{\arraystretch}{0.6} 
\small \begin{array}{ccc}
\blueo & \blueo & \blueo \\
\blueo & \blueo & \blueo \\
\blueo & \blueo & \blueo \\ \hline  \\[-6pt]
\redx & \blueo & \blueo \\
\redx & \redx  & \blueo \\
\redx & \redx & \redx \\ \hline  \\[-6pt]
\blackx & \blackx & \blackx \\
\blackx & \blackx & \blackx \\
\blackx & \blackx & \blackx \\ \hline  \\[-6pt]
\blackx & \blackx & \blackx \\
\blackx & \blackx & \blackx \\
\blackx & \blackx & \blackx \\ \hline  \\[-6pt]
\blackx & \blackx & \blackx \\
\blackx & \blackx & \blackx \\
\blackx & \blackx & \blackx 
\end{array}
\right] \stackrel{\hat Q_2}{\longrightarrow}
\left[\arraycolsep=1.4pt\renewcommand{\arraystretch}{0.6} 
\small \begin{array}{ccc}
\blacko & \blacko & \blacko \\
\blacko & \blacko & \blacko \\
\blacko & \blacko & \blacko \\ \hline  \\[-6pt]
\blueo & \blacko & \blacko \\
\blueo & \blueo & \blacko \\
\blueo & \blueo & \blueo \\ \hline  \\[-6pt]
\redx & \blueo & \blueo \\
\redx & \redx  & \blueo \\
\redx & \redx & \redx \\ \hline  \\[-6pt]
\blackx & \blackx & \blackx \\
\blackx & \blackx & \blackx \\
\blackx & \blackx & \blackx \\ \hline  \\[-6pt]
\blackx & \blackx & \blackx \\
\blackx & \blackx & \blackx \\
\blackx & \blackx & \blackx 
\end{array}
\right] \quad \cdots \quad \stackrel{\hat Q_r}{\longrightarrow}
\left[\arraycolsep=1.4pt\renewcommand{\arraystretch}{0.6} 
\small \begin{array}{ccc}
\blacko & \blacko & \blacko \\
\blacko & \blacko & \blacko \\
\blacko & \blacko & \blacko \\ \hline  \\[-6pt]
\blacko & \blacko & \blacko \\
\blacko & \blacko & \blacko \\
\blacko & \blacko & \blacko \\ \hline  \\[-6pt]
\blacko & \blacko & \blacko \\
\blacko & \blacko & \blacko \\
\blacko & \blacko & \blacko \\ \hline  \\[-6pt]
\blueo & \blacko & \blacko \\
\blueo & \blueo & \blacko \\
\blueo & \blueo & \blueo \\ \hline  \\[-6pt]
\redx & \blueo & \blueo \\
\redx & \redx  & \blueo \\
\redx & \redx & \redx 
\end{array}
\right].
\]
This corresponds to a decomposition of the form
\begin{equation} \label{eq:lqdecomposition}
V_{2} = \hat Q_{1} \cdots \hat Q_{r} \hat L \quad \text{with} \quad \hat L = \begin{bmatrix} 0 \\ \hat L_1 \end{bmatrix},
\end{equation}
where each factor $\hat Q_j$ has a regular WY representation of size at most $2k\times k$ and $\hat L_1$ is a lower triangular $k\times k$ matrix.
\end{paragraph}

\begin{paragraph}{c) Apply orthogonal transformations to $B$} 
After multiplying~\eqref{eq:initright} with $\hat Q_{1} \cdots \hat Q_{r}$ from the right, we get 
\begin{eqnarray}
 B &\gets& B\left( I - \begin{bmatrix}
    0 \\
    V_{1} \\
    V_{2} \\
  \end{bmatrix} T
    \begin{bmatrix}
    0 &
    V^T_{1} &
    V^T_{2} 
  \end{bmatrix}
\right) \begin{bmatrix}
    I \\
    & I \\
    & & \hat Q_1 \cdots \hat Q_r \\
  \end{bmatrix} \nonumber \\
  &=&B\left( \begin{bmatrix}
    I \\
    & I \\
    & & \hat Q_1 \cdots \hat Q_r \\
  \end{bmatrix} - 
  \begin{bmatrix}
    0 \\
    V_{1} \\
    V_{2} \\
  \end{bmatrix} T
    \begin{bmatrix}
    0 &
    V^T_{1} &
    \hat L^T
  \end{bmatrix}
\right) \nonumber \\
&=&  B \begin{bmatrix}
    I \\
    & I \\
    & & \hat Q_1 \cdots \hat Q_r \\
  \end{bmatrix} \left( I - \begin{bmatrix}
    0 \\
    V_{1} \\
    \hat L \\
  \end{bmatrix} T
    \begin{bmatrix}
    0 &
    V^T_{1} &
    \hat L^T
  \end{bmatrix}
\right). \label{eq:commuteQ}
\end{eqnarray}
Hence, the orthogonal transformations nearly commute with the reflectors, but $V_2$ turns into $\hat L$.
The shape of the correspondingly modified matrix $V$ is displayed in Figure~\ref{fig:right-incorporate}~(b).

Additionally exploiting the shape of $\hat L$, see~\eqref{eq:lqdecomposition}, we update columns $s+1:n$ of $B$ according to~\eqref{eq:commuteQ} as follows:
\begin{enumerate}
 \item $B_{:,j+1:n} \gets B_{:,j+1:n} \hat Q_{1} \cdots \hat Q_{r}$, 
 \item $W \gets B_{:,s+1:j} V_1 + B_{:,n-k+1:n} \hat L_{1}$, 
 \item $B_{:,s+1:j} \gets B_{:,s+1:j} - W T V_{1}^{T}$,
 \item $B_{:,n-k+1:n} \gets B_{:,n-k+1:n} - W T \hat L_{1}^{T}$.
\end{enumerate}
\begin{figure}
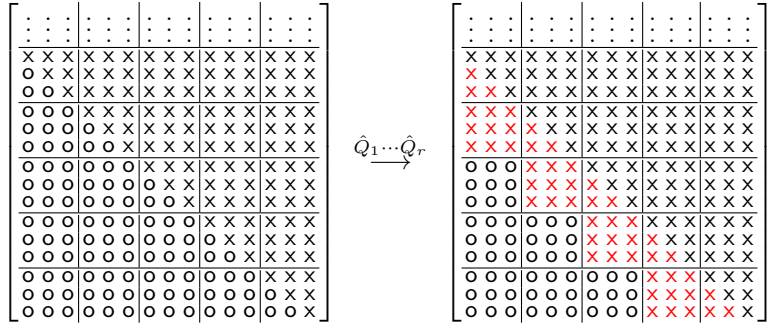

 \[
\left[\arraycolsep=1.4pt\renewcommand{\arraystretch}{0.6} 
\small \begin{array}{ccc|ccc|ccc|ccc|ccc}
\vdots & \vdots & \vdots & \vdots & \vdots & \vdots & \vdots & \vdots & \vdots & \vdots & \vdots & \vdots & \vdots & \vdots & \vdots \\ \hline  \\[-6pt]
\blackx & \blackx & \blackx & \blackx & \blackx & \blackx & \blackx & \blackx & \blackx & \blackx & \blackx & \blackx& \blackx & \blackx & \blackx \\
\blacko & \blackx & \blackx & \blackx & \blackx & \blackx & \blackx & \blackx & \blackx & \blackx & \blackx & \blackx& \blackx & \blackx & \blackx \\
\blacko & \blacko & \blackx & \blackx & \blackx & \blackx & \blackx & \blackx & \blackx & \blackx & \blackx & \blackx& \blackx & \blackx & \blackx \\ \hline  \\[-6pt]
\blacko & \blacko & \blacko & \blackx & \blackx & \blackx & \blackx & \blackx & \blackx & \blackx & \blackx & \blackx& \blackx & \blackx & \blackx \\
\blacko & \blacko & \blacko & \blacko & \blackx & \blackx & \blackx & \blackx & \blackx & \blackx & \blackx & \blackx& \blackx & \blackx & \blackx \\
\blacko & \blacko & \blacko & \blacko & \blacko & \blackx & \blackx & \blackx & \blackx & \blackx & \blackx & \blackx& \blackx & \blackx & \blackx \\ \hline  \\[-6pt]
\blacko & \blacko & \blacko & \blacko & \blacko & \blacko & \blackx & \blackx & \blackx & \blackx & \blackx & \blackx& \blackx & \blackx & \blackx \\
\blacko & \blacko & \blacko & \blacko & \blacko & \blacko & \blacko & \blackx & \blackx & \blackx & \blackx & \blackx& \blackx & \blackx & \blackx \\
\blacko & \blacko & \blacko & \blacko & \blacko & \blacko & \blacko & \blacko & \blackx & \blackx & \blackx & \blackx& \blackx & \blackx & \blackx \\ \hline  \\[-6pt]
\blacko & \blacko & \blacko & \blacko & \blacko & \blacko & \blacko & \blacko & \blacko & \blackx & \blackx & \blackx& \blackx & \blackx & \blackx \\
\blacko & \blacko & \blacko & \blacko & \blacko & \blacko & \blacko & \blacko & \blacko & \blacko & \blackx & \blackx& \blackx & \blackx & \blackx \\
\blacko & \blacko & \blacko & \blacko & \blacko & \blacko & \blacko & \blacko & \blacko & \blacko & \blacko & \blackx& \blackx & \blackx & \blackx \\ \hline  \\[-6pt]
\blacko & \blacko & \blacko & \blacko & \blacko & \blacko & \blacko & \blacko & \blacko & \blacko & \blacko & \blacko& \blackx & \blackx & \blackx \\
\blacko & \blacko & \blacko & \blacko & \blacko & \blacko & \blacko & \blacko & \blacko & \blacko & \blacko & \blacko& \blacko & \blackx & \blackx \\
\blacko & \blacko & \blacko & \blacko & \blacko & \blacko & \blacko & \blacko & \blacko & \blacko & \blacko & \blacko& \blacko & \blacko & \blackx \\
\end{array}
\right] \ \stackrel{\hat Q_{1} \cdots \hat Q_{r}}{\longrightarrow}\ \left[\arraycolsep=1.4pt\renewcommand{\arraystretch}{0.6} 
\small \begin{array}{ccc|ccc|ccc|ccc|ccc}
\vdots & \vdots & \vdots & \vdots & \vdots & \vdots & \vdots & \vdots & \vdots & \vdots & \vdots & \vdots & \vdots & \vdots & \vdots \\ \hline  \\[-6pt]
\blackx & \blackx & \blackx & \blackx & \blackx & \blackx & \blackx & \blackx & \blackx & \blackx & \blackx & \blackx& \blackx & \blackx & \blackx \\
\redx & \blackx & \blackx & \blackx & \blackx & \blackx & \blackx & \blackx & \blackx & \blackx & \blackx & \blackx& \blackx & \blackx & \blackx \\
\redx & \redx & \blackx & \blackx & \blackx & \blackx & \blackx & \blackx & \blackx & \blackx & \blackx & \blackx& \blackx & \blackx & \blackx \\ \hline  \\[-6pt]
\redx & \redx & \redx & \blackx & \blackx & \blackx & \blackx & \blackx & \blackx & \blackx & \blackx & \blackx& \blackx & \blackx & \blackx \\
\redx & \redx & \redx & \redx & \blackx & \blackx & \blackx & \blackx & \blackx & \blackx & \blackx & \blackx& \blackx & \blackx & \blackx \\
\redx & \redx & \redx & \redx & \redx & \blackx & \blackx & \blackx & \blackx & \blackx & \blackx & \blackx& \blackx & \blackx & \blackx \\ \hline  \\[-6pt]
\blacko & \blacko & \blacko & \redx & \redx & \redx & \blackx & \blackx & \blackx & \blackx & \blackx & \blackx& \blackx & \blackx & \blackx \\
\blacko & \blacko & \blacko & \redx & \redx & \redx & \redx & \blackx & \blackx & \blackx & \blackx & \blackx& \blackx & \blackx & \blackx \\
\blacko & \blacko & \blacko & \redx & \redx & \redx & \redx & \redx & \blackx & \blackx & \blackx & \blackx& \blackx & \blackx & \blackx \\ \hline  \\[-6pt]
\blacko & \blacko & \blacko & \blacko & \blacko & \blacko & \redx & \redx & \redx & \blackx & \blackx & \blackx& \blackx & \blackx & \blackx \\
\blacko & \blacko & \blacko & \blacko & \blacko & \blacko & \redx & \redx & \redx & \redx & \blackx & \blackx& \blackx & \blackx & \blackx \\
\blacko & \blacko & \blacko & \blacko & \blacko & \blacko & \redx & \redx & \redx & \redx & \redx & \blackx& \blackx & \blackx & \blackx \\ \hline  \\[-6pt]
\blacko & \blacko & \blacko & \blacko & \blacko & \blacko & \blacko & \blacko & \blacko & \redx & \redx & \redx & \blackx & \blackx & \blackx \\
\blacko & \blacko & \blacko & \blacko & \blacko & \blacko & \blacko & \blacko & \blacko & \redx & \redx & \redx & \redx & \blackx & \blackx \\
\blacko & \blacko & \blacko & \blacko & \blacko & \blacko & \blacko & \blacko & \blacko & \redx & \redx & \redx & \redx & \redx & \blackx \\
\end{array}
\right]
\]
\caption{ \label{fig:fillinB}  Shape of $B_{:,j+1:n} \hat Q_{1} \cdots \hat Q_{r}$.}
\end{figure}
In Step~1, the application of $\hat Q_{1} \cdots \hat Q_{r}$ involves multiplying $B$ with $2k\times 2k$ orthogonal matrices (in terms of their WY representations) from the right.
This will update columns $j+1:n$.
Note that this will transform the structure of $B$ as illustrated in Figure~\ref{fig:fillinB}.
Step~3 introduces fill-in in columns $s+1:j$ while Step~4 does not introduce additional fill-in.
In summary, the transformed matrix $B$ takes the form sketched in Figure~\ref{fig:right-incorporate}~(c).
\end{paragraph}

\begin{paragraph}{d) Apply orthogonal transformations to $Z$} 
Replacing $B$ by $Z$ in~\eqref{eq:commuteQ}, the update of columns $s+1:n$ of $Z$ takes the following form:
\begin{enumerate}
 \item $Z_{:,j+1:n} \gets Z_{:,j+1:n} \hat Q_{1} \cdots \hat Q_{r}$,
 \item $W \gets Z_{:,s+1:j} V_1  + Z_{:,n-k+1:n} \hat L_{1}$, 
 \item $Z_{:,s+1:j} \gets Z_{:,s+1:j} - WT V_{1}^{T}$,
 \item $Z_{:,n-k+1:n} \gets Z_{:,n-k+1:n} - W T \hat L_{1}^{T}$.
\end{enumerate}
\end{paragraph}

\begin{paragraph}{e) Apply orthogonal transformations to $A$} 
The update of $A$ is slightly different due to the presence of the intermediate matrix $Y = A V T$ and the panel which is already reduced.
However, the basic idea remains the same.
After post-multiplying with $\hat Q_{1} \cdots \hat Q_{r}$ we get
\begin{eqnarray*}
   A &\gets& \left(
    A
    -
    Y
    \begin{bmatrix}
      0 & V_{1}^{T} & V_{2}^{T}
    \end{bmatrix}
  \right)
  \begin{bmatrix}
    I \\
    & I \\
    & & \hat Q_1 \cdots \hat Q_r \\
  \end{bmatrix} \\ &=& 
  A
  \begin{bmatrix}
    I \\
    & I \\
    & & \hat Q_1 \cdots \hat Q_r \\
  \end{bmatrix}
  -
  Y
  \begin{bmatrix}
    0 & V_{1}^{T} & \hat L^{T}
  \end{bmatrix}.
\end{eqnarray*}
The first $j-1$ columns of $A$ have already been updated (Fact~\ref{fact4}) but column $j$ still needs to be updated.
We arrive at the following procedure for updating $A$:
\begin{enumerate}
 \item $A_{:,j+1:n} \gets A_{:,j+1:n} \hat Q_{1} \cdots \hat Q_{r}$,
 \item $A_{:,j} \gets A_{:,j} - Y (V_{1})_{k,:}^{T}$,
 \item $A_{:,n-k+1:n} \gets A_{:,n-k+1:n} - Y \hat L_{1}^{T}$.
\end{enumerate}
\end{paragraph}

\begin{paragraph}{e) Partially restore the triangular shape of $B$} 
The absorption of the right reflectors is completed by reducing the last $n-j$ columns of $B$ back to triangular form via a sequence of RQ decompositions from bottom to top. This starts with an RQ decomposition of $B_{n-k+1:n,n-2k+1:n}$.
After updating columns $n-2k+1:n$ of $B$ with the corresponding orthogonal transformation $\tilde Q_1$, we proceed with an RQ decomposition of $B_{n-2k+1:n-k,n-3k+1:n-k}$, and so on, until all sub-diagonal blocks of $B_{:,j+1:n}$ (see Figure~\ref{fig:fillinB}) have been processed.
The resulting orthogonal transformation matrices $\tilde Q_1,\ldots,\tilde Q_r$ are multiplied into $A$ and $Z$ as well:
\[
 \begin{array}{rcl} 
 A_{ :, j + 1 : n } &\gets & A_{ :, j + 1 : n } \tilde Q_{1}^{T} \tilde Q_{2}^{T} \cdots \tilde Q_{r}^{T}, \\
 Z_{ :, j + 1 : n } &\gets& Z_{ :, j + 1 : n } \tilde Q_{1}^{T} \tilde Q_{2}^{T} \cdots \tilde Q_{r}^{T}.
\end{array}
\] 
The shape of $B$ after this procedure is displayed in Figure~\ref{fig:right-incorporate}~(d).
\end{paragraph}

\subsubsection{Absorption of left reflectors} \label{sec:absorbleft}

We now turn our attention to the absorption of the left reflectors $I - U S U^{T}$ into $A$, $B$, and $Q$.
When doing so we are free to apply additional transformations from left \change{(to introduce zeros in $U$)} or right \change{(to introduce zeros in $W = SU^{T}$)}.
Because of the reduced forms of $A$ and $B$, it is cheaper to apply transformations from the left.
The ideas and techniques are quite similar to what has been described in Section~\ref{sec:absorbright} for absorbing right reflectors, and we therefore keep the following description brief.

\begin{paragraph}{a) Initial situation}
We partition $U$ as 
$
  U =\small 
  \begin{bmatrix}
    0 \\
    U_{1} \\
    U_{2} \\
  \end{bmatrix}
$, where $U_{1}$ is a $k \times k$ lower triangular matrix starting at row $s + 1$ (Fact~\ref{fact2}).
\end{paragraph}

\begin{paragraph}{b) Reduce $U$} We reduce the matrix $U_2$ to upper triangular form by a sequence of $r \change{=} (n-j-k)/k$ QR decompositions as illustrated in the following diagram:
\[
\left[\arraycolsep=1.4pt\renewcommand{\arraystretch}{0.6} 
\small \begin{array}{ccc}
\blackx & \blackx & \blackx \\
\blackx & \blackx & \blackx \\
\blackx & \blackx & \blackx \\ \hline  \\[-6pt]
\blackx & \blackx & \blackx \\
\blackx & \blackx & \blackx \\
\blackx & \blackx & \blackx \\ \hline  \\[-6pt]
\blackx & \blackx & \blackx \\
\blackx & \blackx & \blackx \\
\blackx & \blackx & \blackx \\ \hline  \\[-6pt]
\blackx & \blackx & \blackx \\
\blackx & \blackx & \blackx \\
\blackx & \blackx & \blackx \\ \hline  \\[-6pt]
\blackx & \blackx & \blackx \\
\blackx & \blackx & \blackx \\
\blackx & \blackx & \blackx 
\end{array}
\right] \stackrel{\tilde Q_1}{\longrightarrow}
\left[\arraycolsep=1.4pt\renewcommand{\arraystretch}{0.6} 
\small \begin{array}{ccc}
\blackx & \blackx & \blackx \\
\blackx & \blackx & \blackx \\
\blackx & \blackx & \blackx \\ \hline  \\[-6pt]
\blackx & \blackx & \blackx \\
\blackx & \blackx & \blackx \\
\blackx & \blackx & \blackx \\ \hline  \\[-6pt]
\blackx & \blackx & \blackx \\
\blackx & \blackx & \blackx \\
\blackx & \blackx & \blackx \\ \hline  \\[-6pt]
\redx & \redx & \redx \\ 
\blueo & \redx & \redx  \\
\blueo & \blueo & \redx \\ \hline  \\[-6pt]
\blueo & \blueo & \blueo \\
\blueo & \blueo & \blueo \\
\blueo & \blueo & \blueo \\ 
\end{array}
\right] \stackrel{\tilde Q_2}{\longrightarrow}
\left[\arraycolsep=1.4pt\renewcommand{\arraystretch}{0.6} 
\small \begin{array}{ccc}
\blackx & \blackx & \blackx \\
\blackx & \blackx & \blackx \\
\blackx & \blackx & \blackx \\ \hline  \\[-6pt]
\blackx & \blackx & \blackx \\
\blackx & \blackx & \blackx \\
\blackx & \blackx & \blackx \\ \hline  \\[-6pt]
\redx & \redx & \redx \\ 
\blueo & \redx & \redx  \\
\blueo & \blueo & \redx \\ \hline  \\[-6pt]
\blueo & \blueo & \blueo \\
\blacko & \blueo & \blueo \\
\blacko & \blacko & \blueo \\ \hline  \\[-6pt]
\blacko & \blacko & \blacko \\
\blacko & \blacko & \blacko \\
\blacko & \blacko & \blacko \\ 
\end{array}
\right] \quad \cdots \quad \stackrel{\tilde Q_r}{\longrightarrow}
\left[\arraycolsep=1.4pt\renewcommand{\arraystretch}{0.6} 
\small \begin{array}{ccc}
\redx & \redx & \redx \\ 
\blueo & \redx & \redx  \\
\blueo & \blueo & \redx \\ \hline  \\[-6pt]
\blueo & \blueo & \blueo \\
\blacko & \blueo & \blueo \\
\blacko & \blacko & \blueo \\ \hline  \\[-6pt]
\blacko & \blacko & \blacko \\
\blacko & \blacko & \blacko \\
\blacko & \blacko & \blacko \\ \hline  \\[-6pt] 
\blacko & \blacko & \blacko \\
\blacko & \blacko & \blacko \\
\blacko & \blacko & \blacko \\ \hline  \\[-6pt] 
\blacko & \blacko & \blacko \\
\blacko & \blacko & \blacko \\
\blacko & \blacko & \blacko \\ 
\end{array}
\right].
\]
This corresponds to a decomposition of the form
\begin{equation} \label{eq:qrdecomposition}
U_{2} = \tilde Q_{1} \cdots \tilde Q_{r} \tilde R \quad \text{with} \quad \tilde R = \begin{bmatrix} \tilde R_1 \\ 0 \end{bmatrix},
\end{equation}
where $\tilde R_1$ is a $k\times k$ upper triangular matrix.
\end{paragraph}

\begin{paragraph}{c) Apply orthogonal transformations to $B$}
We first update columns $s+1:j$ of $B$, corresponding to the ``spike'' shown in Figure~\ref{fig:right-incorporate}~(d):
\begin{enumerate}
 \item $B_{s+1:j,s+1:j} \gets B_{s+1:j,s+1:j} - U_1 S^T \begin{bmatrix} U_1^T & U_2^T \end{bmatrix} B_{s+1:n,s+1:j}$,
 \item $B_{j+1:n,s+1:j} \gets 0$.
\end{enumerate}
Here, we use that columns $s+1:j$ are guaranteed to be in triangular form after the application of the right and left reflectors (Fact~\ref{fact5}).

For the remaining columns, we multiply with $\tilde Q_r^T \cdots \tilde Q_1^T$ from the left and get
 \begin{eqnarray}
   B &\gets& \begin{bmatrix}
    I \\
    & I \\
    & & \tilde Q_r^T \cdots \tilde Q_1^T \\
  \end{bmatrix}
  \left(
    I -
    \begin{bmatrix}
      0 \\
      U_{1} \\
      U_{2} \\
    \end{bmatrix}
    S^{T}
    \begin{bmatrix}
      0 & U_{1}^{T} & U_{2}^{T}
    \end{bmatrix}
  \right) B  \nonumber \\
  &=&   \left(
    \begin{bmatrix}
    I \\
    & I \\
    & & \tilde Q_r^T \cdots \tilde Q_1^T \\
  \end{bmatrix} -
    \begin{bmatrix}
      0 \\
      U_{1} \\
      \tilde R \\
    \end{bmatrix}
    S^{T}
    \begin{bmatrix}
      0 & U_{1}^{T} & U_{2}^{T}
    \end{bmatrix}
  \right) B \nonumber \\
  &=&   \left(
    I -
    \begin{bmatrix}
      0 \\
      U_{1} \\
      \tilde R \\
    \end{bmatrix}
    S^{T}
    \begin{bmatrix}
      0 & U_{1}^{T} & \tilde R^{T}
    \end{bmatrix}
  \right)\begin{bmatrix}
    I \\
    & I \\
    & & \tilde Q_r^T \cdots \tilde Q_1^T \\
  \end{bmatrix} B. \label{eq:commuteQ2}
\end{eqnarray}
Additionally exploiting the shape of $\tilde R$, see~\eqref{eq:qrdecomposition}, we update columns $j+1:n$ of $B$ according to~\eqref{eq:commuteQ2} as follows:
\begin{enumerate} \setcounter{enumi}{2}
 \item $B_{j+1:n,s+1:n} \gets \tilde Q_r^T \cdots \tilde Q_1^T B_{j+1:n,s+1:n}$,
 \item $W \gets B_{s+1:j+k,j+1:n}^T     \begin{bmatrix}
      U_{1} \\
      \tilde R_1 \\
    \end{bmatrix}$,
 \item $B_{s+1:j+k,j+1:n} \gets B_{s+1:j+k,j+1:n} - \begin{bmatrix}
      U_{1} \\
      \tilde R_1 \\
    \end{bmatrix}S^T W^T$.
\end{enumerate}
The triangular shape of $B_{j+1:n,j+1:n}$ is exploited in Step~3 and gets transformed into the shape shown in Figure~\ref{fig:fillinB}.  
\end{paragraph}

\begin{paragraph}{d) Apply orthogonal transformations to $Q$}
  Replace $B$ with $Q$ in (\ref{eq:commuteQ2}) and get
\begin{enumerate}
 \item $Q_{:,j+1:n} \gets Q_{:,j+1:n} \tilde Q_1 \cdots \tilde Q_r$,
 \item $W \gets Q_{:,s+1:j+k}     \begin{bmatrix}
      U_{1} \\
      \tilde R_1 \\
    \end{bmatrix}$,
 \item $Q_{:,s+1:j+k} \gets Q_{:,s+1:j+k} - W S \begin{bmatrix}
      U_{1}^T & \tilde R_1^T 
    \end{bmatrix}$.
\end{enumerate} 
\end{paragraph}

\begin{paragraph}{e) Apply orthogonal transformations to $A$} Exploiting that the first $j-1$ columns of $A$ are updated and zero below row $j$ (Fact~\ref{fact4}), the update of $A$ takes the form:
\begin{enumerate}
 \item $A_{j+1:n,j:n} \gets \tilde Q_r^T \cdots \tilde Q_1^T A_{j+1:n,j:n}$,
 \item $W \gets A_{s+1:j+k,j:n}^T     \begin{bmatrix}
      U_{1} \\
      \tilde R_1 \\
    \end{bmatrix}$,
 \item $A_{s+1:j+k,j:n} \gets A_{s+1:j+k,j:n} - \begin{bmatrix}
      U_{1} \\
      \tilde R_1 \\
    \end{bmatrix}S^T W^T$.
\end{enumerate} 
\end{paragraph}

\begin{paragraph}{f) Restore the triangular shape of $B$} 
At this point, the first $j$ columns of $B$ are in triangular form (see Part~c), while the last $n-j$ columns are not and take the form shown in Figure~\ref{fig:fillinB}, right.
We reduce columns $j+1:n$ of $B$ back to triangular form by a sequence of QR decompositions from top to bottom.
This starts with a QR decomposition of $B_{j+1:j+2k,j+1:j+k}$.
After updating rows $j+1:j+2k$ of $B$ with the corresponding orthogonal transformation $\hat Q_1$, we proceed with a QR decomposition of $B_{j+k+1:j+3k,j+k+1:j+2k}$, and so on, until all subdiagonal blocks of $B_{:,j+1:n}$ have been processed. The resulting orthogonal transformation matrices $\hat Q_1,\ldots,\hat Q_r$ are multiplied into $A$ and $Q$ as well:
\[
 \begin{array}{rcl} 
 A_{j + 1 : n,j:n } &\gets & \hat Q_{r}^{T} \cdots \hat Q_{2}^{T} \hat Q_{1}^{T} A_{j + 1 : n,j:n }, \\
 Q_{ :, j + 1 : n } &\gets& Q_{ :, j + 1 : n } \hat Q_{1} \hat Q_{2} \cdots \hat Q_{r}.
\end{array}
\] 
This completes the absorption of right and left reflectors.
\change{Note that the absorption procedure preserves the zero pattern of the first $j$ columns of $A$.}

\begin{remark} \label{remark:notdivide}
\change{The procedure needs to be only slightly adapted to correctly treat the case when $k$ does not divide $n-j$. The first QL (respectively, QR) factorization in the reduction of $V$ (respectively,~$U$), needs to involve $\tilde{k}$ instead of $2k$ rows, where $k < \tilde{k} < 2k$ is such that $k$ divides $n-j-\tilde{k}$. All the other factorizations still involve $2k$ rows, as described. Once the orthogonal transformations are applied to $B$, the first (respectively, last) subdiagonal block of $B$ will be of size different from the rest. Therefore, we will need to adapt the size of the submatrix for the last RQ (respectively, QR) factorization during the restoration of the triangular shape as well.}
\end{remark}
\end{paragraph}

\subsection{Summary of algorithm}

Summarizing the developments of this section, Algorithm~\ref{alg:main} gives the basic form of our newly proposed Householder-based method for reducing a matrix pencil $A-\lambda B$, with upper triangular $B$, to Hessenberg-triangular form.
The case of iterative refinement failures can be handled in different ways.
In Algorithm~\ref{alg:main} the last left reflector is explicitly undone, which is arguably the simplest approach.
In our implementation, we instead use an approach that avoids redundant computations at the expense of added complexity.
The differences in performance should be minimal.

\begin{algorithm2e}[htbp] 
  \caption{$[H, T, Q, Z] = \mathtt{HouseHT}(A, B)$}
  \label{alg:main}
  \tcp{Initialize}
  $Q \gets I$; $Z \gets I$\;
  Clear out $V$, $T$, $U$, $S$, $Y$\;
  $k \gets 0$\tcp*[l]{$k$ keeps track of the number of delayed reflectors}
  \tcp{For each column to reduce in $A$}
  \For{$j = 1:n-2$}
  {
    \tcp{Reduce column $j$ of $A$}
    Update column $j$ of $A$ from both sides w.r.t.~the $k$ delayed updates (see Section~\ref{sec:columnj}a)\;
    Reduce column $j$ of $A$ with a new reflector $I - \beta u u^{T}$ (see Section~\ref{sec:columnj}b)\;
    Augment $I - U S U^{T}$ with $I - \beta u u^{T}$ (see Section~\ref{sec:columnj}b)\;
    \tcp{Implicitly reduce column $j + 1$ of $B$}
    Attempt to solve the triangular system (see Section~\ref{sec:columnj}c) \change{for} vector $x$\;
    \eIf{the solve succeeded}
    {
      Reduce $x$ with a new reflector $I - \gamma v v^{T}$ (see Section~\ref{sec:columnj}d)\;
      Augment $I - V T V^{T}$ with $I - \gamma v v^{T}$ (see Section~\ref{sec:columnj}d)\;
      Augment $Y$ with $I - \gamma v v^{T}$ (see Section~\ref{sec:columnj}d)\;
      $k \gets k + 1$\;
    }
    {
      Undo the reflector $I - \beta u u^{T}$ by restoring the $j$th column of $A$, removing the last column of $U$, and removing the last row and column of $S$\;
    }
    \tcp{Absorb all reflectors}
    \If{$k = \nb$ {\bf or} the solve failed}
    {
      Absorb reflectors from the right (see Section~\ref{sec:absorbright})\;
      Absorb reflectors from the left (see Section~\ref{sec:absorbleft})\;
      Clear out $V$, $T$, $U$, $S$, $Y$\;
      $k \gets 0$\;
    }
  }
  \tcp{We are done}
  \Return $[A, B, Q, Z]$\;
\end{algorithm2e}

The algorithm has been designed to require $\Theta(n^3)$ flops.
Instead of a tedious derivation of the precise number of flops (which is further complicated by the occasional need for iterative refinement), we have measured this number experimentally; see Section~\ref{sec:experiments}.
\change{When applied to large random matrices (for which few iterative refinement iterations are necessary), {\tt HouseHT} requires roughly} 
$2.1 \pm 0.2$ 
times more flops than {\tt DGGHRD3}.
Note that on more difficult problems \change{(i.e., those requiring more iterations of iterative refinement and/or more prematurely triggered absorptions to maintain stability)} this factor will increase.

\subsection{Varia}
\label{sec:varia}

In this section, we discuss a couple of additions that we have made to the basic algorithm described above.
These modifications make the algorithm \change{more robust toward} some types of difficult inputs (Section~\ref{sec:preprocessing}) and also slightly reduces the number of flops required for absorption of reflectors (Section~\ref{sec:accelerated-reduction}).

\subsubsection{Preprocessing} \label{sec:preprocessing}

A number of applications, such as mechanical systems with constraints~\cite{Ilchmann2017} and discretized fluid flow problems~\cite{Heinkenschloss2008}, give rise to matrix pencils that feature a potentially large number of infinite eigenvalues. Often, many or even all of the infinite eigenvalues are induced by the sparsity of $B$. This can be exploited, before performing any reduction, to reduce the effective problem size for both the HT-reduction and the subsequent eigenvalue computation. As we will see in Section~\ref{sec:experiments}, such a preprocessing step is particularly beneficial to the newly proposed algorithm\change{:} the removal of infinite eigenvalues reduces the need for iterative refinement when solving linear systems with the matrix $B$.

We have implemented preprocessing for the case that $B$ has $\ell > 1$ zero columns. We choose an appropriate permutation matrix $Z_0$ such that the first $\ell$ columns of $B Z_0$ are zero. If $B$ is diagonal, we also set $Q_0 = Z_0$ to preserve the diagonal structure; otherwise we set $Q_0 = I$.
Letting $A_0 = Q_0^T A Z_0$, we compute a QR decomposition of its first $\ell$ columns: 
$A_0(:, 1:\ell) = Q_1 \big[ {A_{11} \atop 0} \big]$, where $Q_1$ is an $n \times n$ orthogonal matrix and $A_{11}$ is an $\ell \times \ell$ upper triangular matrix. Then
$$
	A_1 = (Q_0 Q_1)^T A Z_0 
		= \left[ \begin{array}{cc} A_{11} & A_{12} \\ 0 & A_{22} \end{array} \right],
	\quad
	B_1 = (Q_0 Q_1)^T B Z_0 
		= \left[ \begin{array}{cc} 0 & B_{12} \\ 0 & B_{22} \end{array} \right],
$$
where $A_{22}, B_{22} \in \R^{(n-\ell)\times (n-\ell)}$. Noting that the top left $\ell \times \ell$ part of $A_1-\lambda B_1$ is already in generalized Schur form, only the trailing part $A_{22} -\lambda B_{22}$ needs to be reduced to Hessenberg-triangular form.

\subsubsection{Accelerated reduction of $V_{2}$ and $U_{2}$}
\label{sec:accelerated-reduction}

As we will see in the numerical experiments in Section~\ref{sec:experiments} below, Algorithm~\ref{alg:main} spends a significant fraction of the total execution time on the absorption of reflectors.
Inspired by techniques developed in~\cite[Sec.~2.2]{Kagstrom2008} for reducing a matrix pencil to \emph{block} Hessenberg-triangular form, we now describe a modification of the algorithms described in Sections~\ref{sec:absorbright} and~\ref{sec:absorbleft} that attains better performance by reducing the number of flops. 
We first describe the case when absorption takes place after accumulating $\nb$ reflectors and then briefly discuss the case when absorption takes place after an iterative refinement failure.

\begin{paragraph}{Reduction of $V_{2}$}
We first consider the reduction of $V_{2}$ from Section~\ref{sec:absorbright}~b) and partition $B$, $V_{2}$ into blocks of size $\nb \times \nb$ as indicated in Figure~\ref{fig:accelrq1}~(a).
Recall that the algorithm for reducing $V_{2}$ proceeds by computing a sequence of QL decompositions of \emph{two} adjacent blocks.
Our proposed modification computes QL decompositions of $\ell \geq 3$ adjacent blocks at a time.
Figure~\ref{fig:accelrq1}~(b)--(d) illustrates this process for $\ell = 3$, showing how the reduction of 
$V_{2}$ affects $B$ when updating it with the corresponding  transformations from the right.
Compared to Figure~\ref{fig:fillinB}, the fill-in increases from overlapping $2 \nb \times 2 \nb$ blocks to overlapping $\ell \nb \times \ell \nb$ blocks on the diagonal.
For a matrix $V_{2}$ of size $n\times \nb$, the modified algorithm involves around $(n-\nb)/(\ell-1)\nb$ transformations, each corresponding to a WY representation of size $\ell\nb \times \nb$.
This compares favorably with the original algorithm which involves around $(n-\nb)/\nb$ WY representations of size $2\nb \times \nb$.
For $\ell = 3$ this implies that the overall cost of applying WY representations is reduced by between $10\%$ and $25\%$, depending on how much of their triangular structure is exploited; see also~\cite{Kagstrom2008}. These reductions quickly flatten out when increasing $\ell$ further. (Our implementation uses $\ell = 4$, which we found to be nearly optimal for the matrix sizes and computing environments considered in Section~\ref{sec:experiments}.) To keep the rest of the exposition simple, we focus on the case $\ell = 3$; the generalization to larger $\ell$ is straightforward. 

\begin{figure}[htb] 
  
  \subfloat[Initial configuration.]
  {
    \begin{tikzpicture}
      [scale=0.4,
      y=-1cm]

      \begin{scope}
        [xshift=-2cm,yshift=-14cm]

        \node [above] at (3.5,0) {$B$};
        \draw [fill=green!20] (0,0) -- (7,0) -- (7,7) -- cycle;
        \draw (1,1) -- (7,1); \draw (1,1) -- (1,0);
        \draw (2,2) -- (7,2); \draw (2,2) -- (2,0);
        \draw (3,3) -- (7,3); \draw (3,3) -- (3,0);
        \draw (4,4) -- (7,4); \draw (4,4) -- (4,0);
        \draw (5,5) -- (7,5); \draw (5,5) -- (5,0);
        \draw (6,6) -- (7,6); \draw (6,6) -- (6,0);
      \end{scope}
      
      \begin{scope}[xshift=6cm,yshift=-14cm]
        \node [above] at (0.5,0) {$V_{2}$};

        \draw [fill=green!20] (0,0) -- (0,7) -- (1,7) -- (1,0) -- cycle;

        \draw [fill=red!20] (0,0) rectangle +(1,3);

        \draw (0,1) -- (1,1);
        \draw (0,2) -- (1,2);
        \draw (0,3) -- (1,3);
        \draw (0,4) -- (1,4);
        \draw (0,5) -- (1,5);
        \draw (0,6) -- (1,6);
      \end{scope}
    \end{tikzpicture}}\ 
  \subfloat[1st reduction step.]
  {
    \begin{tikzpicture}
      [scale=0.4,
      y=-1cm]

      \begin{scope}
        [xshift=-2cm,yshift=-14cm]

        \node [above] at (3.5,0) {$B$};
        \draw [fill=green!20] (0,0) -- (0,3) -- (3,3) -- (7,7) -- (7,0) -- cycle;
        \draw (0,1) -- (7,1); \draw (1,3) -- (1,0);
        \draw (0,2) -- (7,2); \draw (2,3) -- (2,0);
        \draw (3,3) -- (7,3); \draw (3,3) -- (3,0);
        \draw (4,4) -- (7,4); \draw (4,4) -- (4,0);
        \draw (5,5) -- (7,5); \draw (5,5) -- (5,0);
        \draw (6,6) -- (7,6); \draw (6,6) -- (6,0);

        \fill [pattern=north east lines, pattern color=black!50] (0,0) rectangle +(3,3);
        
        \draw [very thick] (0,0) rectangle +(3,3);
      \end{scope}
      
      \begin{scope}[xshift=6cm,yshift=-14cm]
        \node [above] at (0.5,0) {$V_{2}$};
        \draw [fill=white!20] (0,0) -- (0,7) -- (1,7) -- (1,0) -- cycle;
        \draw [fill=green!20] (0,2) -- (0,7) -- (1,7) -- (1,3) -- cycle;
        \draw [fill=red!20] (0,2) -- ++(1,1) -- ++(0,2) -- ++(-1,0) -- cycle;
        \draw (0,1) -- (1,1);
        \draw (0,2) -- (1,2);
        \draw (0,3) -- (1,3);
        \draw (0,4) -- (1,4);
        \draw (0,5) -- (1,5);
        \draw (0,6) -- (1,6);

        \fill [pattern=north east lines, pattern color=black!50] (0,0) rectangle +(1,3);
      \end{scope}
    \end{tikzpicture}}\ 
  \subfloat[2nd reduction step.]
  {
    \begin{tikzpicture}
      [scale=0.4,
      y=-1cm]

      \begin{scope}
        [xshift=-2cm,yshift=-14cm]

        \node [above] at (3.5,0) {$B$};
        \draw [fill=green!20] (0,0) -- (0,3) -- (2,3) -- (2,5) -- (5,5) -- (7,7) -- (7,0) -- cycle;
        \draw (0,1) -- (7,1); \draw (1,3) -- (1,0);
        \draw (0,2) -- (7,2); \draw (2,3) -- (2,0);
        \draw (2,3) -- (7,3); \draw (3,5) -- (3,0);
        \draw (2,4) -- (7,4); \draw (4,5) -- (4,0);
        \draw (5,5) -- (7,5); \draw (5,5) -- (5,0);
        \draw (6,6) -- (7,6); \draw (6,6) -- (6,0);
        
        \fill [pattern=north east lines, pattern color=black!50] (2,0) rectangle +(3,5);

        \draw [very thick] (0,0) rectangle +(3,3);
        \draw [very thick] (2,2) rectangle +(3,3);
      \end{scope}
      
      \begin{scope}[xshift=6cm,yshift=-14cm]
        \node [above] at (0.5,0) {$V_{2}$};
        \draw [fill=white!20] (0,0) -- (0,7) -- (1,7) -- (1,0) -- cycle;
        \draw [fill=green!20] (0,4) -- (0,7) -- (1,7) -- (1,5) -- cycle;
        \draw [fill=red!20] (0,4) -- ++(1,1) -- ++(0,2) -- ++(-1,0) -- cycle;
        \draw (0,1) -- (1,1);
        \draw (0,2) -- (1,2);
        \draw (0,3) -- (1,3);
        \draw (0,4) -- (1,4);
        \draw (0,5) -- (1,5);
        \draw (0,6) -- (1,6);

        \fill [pattern=north east lines, pattern color=black!50] (0,2) rectangle +(1,3);
      \end{scope}
    \end{tikzpicture}}\ 
  \subfloat[3rd reduction step.]
  {
    \begin{tikzpicture}
      [scale=0.4,
      y=-1cm]

      \begin{scope}
        [xshift=-2cm,yshift=-14cm]

        \node [above] at (3.5,0) {$B$};
        \draw [fill=green!20] (0,0) -- (0,3) -- (2,3) -- (2,5) -- (4,5) -- (4,7) -- (7,7) -- (7,0) -- cycle;
        \draw (0,1) -- (7,1); \draw (1,3) -- (1,0);
        \draw (0,2) -- (7,2); \draw (2,3) -- (2,0);
        \draw (2,3) -- (7,3); \draw (3,5) -- (3,0);
        \draw (2,4) -- (7,4); \draw (4,5) -- (4,0);
        \draw (4,5) -- (7,5); \draw (5,7) -- (5,0);
        \draw (4,6) -- (7,6); \draw (6,7) -- (6,0);

        \fill [pattern=north east lines, pattern color=black!50] (4,0) rectangle +(3,7);

        \draw [very thick] (0,0) rectangle +(3,3);
        \draw [very thick] (2,2) rectangle +(3,3);
        \draw [very thick] (4,4) rectangle +(3,3);
      \end{scope}
      
      \begin{scope}[xshift=6cm,yshift=-14cm]
        \node [above] at (0.5,0) {$V_{2}$};
        \draw [fill=white!20] (0,0) -- (0,7) -- (1,7) -- (1,0) -- cycle;
        \draw [fill=green!20] (0,6) -- (0,7) -- (1,7) -- (1,7) -- cycle;
        \draw (0,1) -- (1,1);
        \draw (0,2) -- (1,2);
        \draw (0,3) -- (1,3);
        \draw (0,4) -- (1,4);
        \draw (0,5) -- (1,5);
        \draw (0,6) -- (1,6);

        \fill [pattern=north east lines, pattern color=black!50] (0,4) rectangle +(1,3);
      \end{scope}
    \end{tikzpicture}}

  \caption{
    Reduction of $V_{2}$ to lower triangular form by successive QL decompositions of $\ell = 3$ blocks and its effect on the shape of $B$.
    The diagonal patterns show what has been modified relative to the previous step.
    The thick lines aim to clarify the block structure.
    The red regions identify the sub-matrices of $V_{2}$ that will be reduced in the next step.
  }
  \label{fig:accelrq1}
\end{figure}
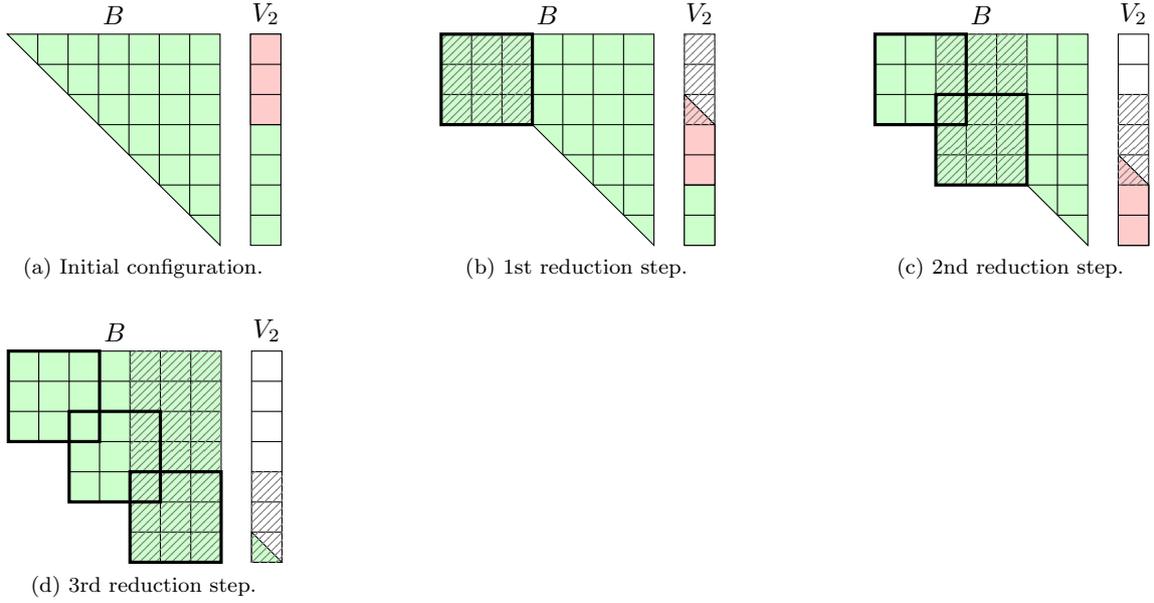
\end{paragraph}

\begin{paragraph}{Block triangular reduction of $B$ from the right}

After the reduction of $V_{2}$, we need to return $B$ to a form that facilitates the solution of linear systems with $B$ during the reduction of the next panel.
If we were to reduce the matrix $B$ in Figure~\ref{fig:accelrq1}~(d) fully back to triangular form then the advantages of the modification would be entirely consumed by this additional computational cost.
To avoid this, we reduce $B$ only to \emph{block} triangular form (with blocks of size $2 \nb \times 2 \nb$) using the following procedure.
Consider \change{an} RQ decomposition of an arbitrary $2\nb\times 3\nb$ matrix $C$:
\[
 C
= RQ = 
 \begin{bmatrix}
  0 & R_{12} & R_{13} \\
  0 & 0 & R_{23} 
 \end{bmatrix}
 \begin{bmatrix}
  Q_{11} & Q_{12} & Q_{13} \\
  Q_{21} & Q_{22} & Q_{23} \\
  Q_{31} & Q_{32} & Q_{33} \\
 \end{bmatrix}.
\]
Compute an LQ decomposition of the first block row of $Q$:
\[
E_1^T Q = 
 \begin{bmatrix}
  Q_{11} & Q_{12} & Q_{13} 
 \end{bmatrix} = 
  \begin{bmatrix}
  D_{11} & 0 & 0 \\
 \end{bmatrix} \tilde Q,
\]
where $E_1 = \begin{bmatrix} I_k & 0 & 0\end{bmatrix}^{T}$.
In other words, we have
\begin{displaymath}
  E_{1}^{T} Q \tilde Q^{T} =
  \begin{bmatrix}
    D_{11} & 0 & 0
  \end{bmatrix}
\end{displaymath}
with $D_{11}$ lower triangular.
Since the rows of this matrix are orthogonal and the matrix is triangular it must in fact be diagonal with diagonal entries $\pm 1$.
The first $\nb$ columns of $Q \tilde Q^{T}$ are orthogonal and each therefore has unit norm.
But since the top $\nb \times \nb$ block has $\pm 1$ on the diagonal,
\change{the off-diagonal entries in the first block row and column of $Q \tilde Q^T$ are zero. In particular,}
the first block column of $Q \tilde Q^{T}$ must be $E_{1} D_{11}$.
Thus, when applying $\tilde Q^T$ to $C$ from the right we obtain
\[
 C \tilde Q^T = RQ\tilde Q^T = 
 \begin{bmatrix}
  0 & R_{12} & R_{13} \\
  0 & 0 & R_{23} 
 \end{bmatrix}
 \begin{bmatrix}
  D_{11} & 0 & 0 \\
   0 & \hat Q_{22} & \hat Q_{23} \\
  0 & \hat Q_{32} & \hat Q_{33} \\
 \end{bmatrix} =  \begin{bmatrix}
  0 & \hat C_{12} & \hat C_{13} \\
  0 & \hat C_{22} & \hat C_{23} \\
 \end{bmatrix}.
\]
Note that multiplying with $\tilde Q^{T}$ from the \emph{right} reduces the first \emph{block column} of $C$.
Of course, the same effect could be attained with $Q$ but the key advantage of using $\tilde Q$ instead of $Q$ is that $\tilde Q$ consists of only $\nb$ reflectors with a WY representation of size $3\nb\times \nb$ compared with $Q$ which consists of $2\nb$ reflectors with a WY representation of size $3\nb \times 2\nb$.
This makes it significantly cheaper to apply $\tilde Q$ to other matrices.

Analogous constructions as those above can be made to efficiently reduce the \emph{last block row} of a $3\nb \times 2\nb$ matrix by multiplication from the \emph{left}.
Replace $C = RQ$ with $C = QR$ and replace the LQ decomposition of $E_{1}^{T} Q$ with a QL decomposition of $Q E_{3}$.
The matrix $\tilde Q^{T} Q$ will have special structure in its last block row and column (instead of the first block row and column). 

We apply the procedure described above%
\footnote{
  Our implementation actually computes RQ decompositions of full diagonal blocks (i.e., $3 \nb \times 3 \nb$ instead of $2 \nb \times 3 \nb$).
  The result is essentially the same but the performance is slightly worse. 
}
to $B$ in Figure~\ref{fig:accelrq2}~(a) starting at the bottom and obtain the shape shown in Figure~\ref{fig:accelrq2}~(b).
Continuing in this manner from bottom to top eventually yields a block triangular matrix with $2\nb\times 2\nb$ diagonal blocks, as shown in Figure~\ref{fig:accelrq2}~(a)--(d).
\begin{figure}[htb] 
  \centering
  \subfloat[Initial config.]
            {
  \begin{tikzpicture}
    [scale=0.4,
    y=-1cm]

    \begin{scope}
      [xshift=-2cm,yshift=-14cm]

      \draw [fill=green!20] (0,0) -- (0,3) -- (2,3) -- (2,5) -- (4,5) -- (4,7) -- (7,7) -- (7,0) -- cycle;

      \fill [red!20] (4,5) rectangle +(3,2);

      \draw (0,1) -- (7,1); \draw (1,3) -- (1,0);
      \draw (0,2) -- (7,2); \draw (2,3) -- (2,0);
      \draw (2,3) -- (7,3); \draw (3,5) -- (3,0);
      \draw (2,4) -- (7,4); \draw (4,5) -- (4,0);
      \draw (4,5) -- (7,5); \draw (5,7) -- (5,0);
      \draw (4,6) -- (7,6); \draw (6,7) -- (6,0);

      \draw [very thick] (0,0) rectangle +(3,3);
      \draw [very thick] (2,2) rectangle +(3,3);
      \draw [very thick] (4,4) rectangle +(3,3);
    \end{scope}
  \end{tikzpicture}}\ 
  \subfloat[1st reduction.]
            {
  \begin{tikzpicture}
    [scale=0.4,
    y=-1cm]

    \begin{scope}
      [xshift=-2cm,yshift=-14cm]

      \draw [fill=green!20] (0,0) -- (0,3) -- (2,3) -- (2,5) -- (5,5) -- (5,7) -- (7,7) -- (7,0) -- cycle;

      \fill [red!20] (2,3) rectangle +(3,2);

      \draw (0,1) -- (7,1); \draw (1,3) -- (1,0);
      \draw (0,2) -- (7,2); \draw (2,3) -- (2,0);
      \draw (2,3) -- (7,3); \draw (3,5) -- (3,0);
      \draw (2,4) -- (7,4); \draw (4,5) -- (4,0);
      \draw (4,5) -- (7,5); \draw (5,7) -- (5,0);
      \draw (5,6) -- (7,6); \draw (6,7) -- (6,0);

      \fill [pattern=north east lines, pattern color=black!50] (4,0) rectangle +(3,7);

      \draw [very thick] (0,0) rectangle +(3,3);
      \draw [very thick] (2,2) rectangle +(3,3);
      \draw [very thick] (5,5) rectangle +(2,2);
    \end{scope}
  \end{tikzpicture}}\ 
  \subfloat[2nd reduction.]
            {
  \begin{tikzpicture}
    [scale=0.4,
    y=-1cm]

    \begin{scope}
      [xshift=-2cm,yshift=-14cm]

      \draw [fill=green!20] (0,0) -- (0,3) -- (3,3) -- (3,5) -- (5,5) -- (5,7) -- (7,7) -- (7,0) -- cycle;

      \fill [red!20] (0,1) rectangle +(3,2);

      \draw (0,1) -- (7,1); \draw (1,3) -- (1,0);
      \draw (0,2) -- (7,2); \draw (2,3) -- (2,0);
      \draw (2,3) -- (7,3); \draw (3,5) -- (3,0);
      \draw (3,4) -- (7,4); \draw (4,5) -- (4,0);
      \draw (4,5) -- (7,5); \draw (5,7) -- (5,0);
      \draw (5,6) -- (7,6); \draw (6,7) -- (6,0);

      \fill [pattern=north east lines, pattern color=black!50] (2,0) rectangle +(3,5);

      \draw [very thick] (0,0) rectangle +(3,3);
      \draw [very thick] (3,3) rectangle +(2,2);
      \draw [very thick] (5,5) rectangle +(2,2);
    \end{scope}
  \end{tikzpicture}}\ 
  \subfloat[3rd reduction.]
            {
  \begin{tikzpicture}
    [scale=0.4,
    y=-1cm]

    \begin{scope}
      [xshift=-2cm,yshift=-14cm]

      \draw [fill=green!20] (0,0) -- (0,1) -- (1,1) -- (1,3) -- (3,3) -- (3,5) -- (5,5) -- (5,7) -- (7,7) -- (7,0) -- cycle;

      \draw (1,1) -- (7,1); \draw (1,3) -- (1,0);
      \draw (1,2) -- (7,2); \draw (2,3) -- (2,0);
      \draw (2,3) -- (7,3); \draw (3,5) -- (3,0);
      \draw (3,4) -- (7,4); \draw (4,5) -- (4,0);
      \draw (4,5) -- (7,5); \draw (5,7) -- (5,0);
      \draw (5,6) -- (7,6); \draw (6,7) -- (6,0);

      \fill [pattern=north east lines, pattern color=black!50] (0,0) rectangle +(3,3);

      \draw [very thick] (0,0) rectangle +(1,1);
      \draw [very thick] (1,1) rectangle +(2,2);
      \draw [very thick] (3,3) rectangle +(2,2);
      \draw [very thick] (5,5) rectangle +(2,2);
    \end{scope}
  \end{tikzpicture}}

\caption{Successive reduction of $B$ to block triangular form.
  The diagonal patterns show what has been modified from the previous configuration.
  The thick lines aim to clarify the block structure.
  The red regions identify the sub-matrices of $B$ that will be reduced in the next step.} \label{fig:accelrq2}
\end{figure}
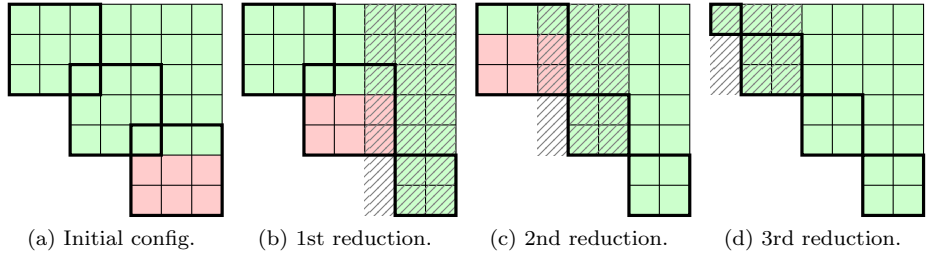

\end{paragraph}

\begin{paragraph}{Reduction of $U_{2}$}

  When absorbing reflectors from the left we reduce $U_{2}$ to upper triangular form as described in Section~\ref{sec:absorbleft}~b).
  The reduction of $U_{2}$ can be accelerated in much the same way as the reduction of $V_{2}$.
  However, since $B$ is \emph{block} triangular at this point, the tops of the sub-matrices of $U_{2}$ chosen for reduction must be aligned with the tops of the corresponding diagonal blocks of $B$.
  Figure~\ref{fig:accelqr1} gives a detailed example with proper alignment for $\ell = 3$.
  In particular, note that the first reduction uses a $2\nb \times \nb$ sub-matrix in order to align with the top of the first (i.e., bottom-most) diagonal block.
  Subsequent reductions use $3\nb \times \nb$ except the final reduction which is a special case. 
  
  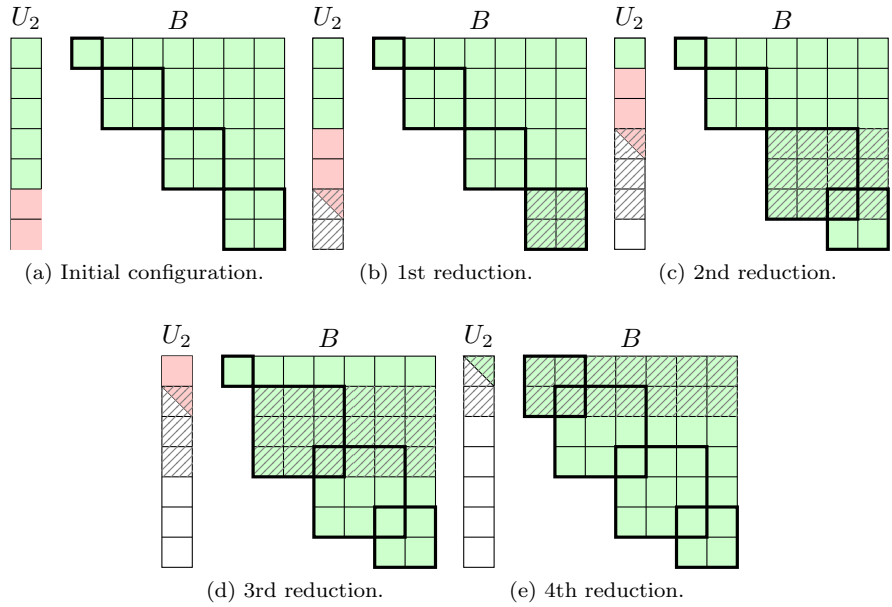
\begin{figure}[htb] 
    \centering
    \subfloat[Initial configuration.]
    {
      \begin{tikzpicture}
        [scale=0.4,
        y=-1cm]

        \begin{scope}
          [xshift=-2cm,yshift=-14cm]

          \node [above] at (3.5,0) {$B$};
          
          \draw [fill=green!20] (0,0) -- (0,1) -- (1,1) -- (1,3) -- (3,3) -- (3,5) -- (5,5) -- (5,7) -- (7,7) -- (7,0) -- cycle;

          \draw (1,1) -- (7,1); \draw (1,3) -- (1,0);
          \draw (1,2) -- (7,2); \draw (2,3) -- (2,0);
          \draw (2,3) -- (7,3); \draw (3,5) -- (3,0);
          \draw (3,4) -- (7,4); \draw (4,5) -- (4,0);
          \draw (4,5) -- (7,5); \draw (5,7) -- (5,0);
          \draw (5,6) -- (7,6); \draw (6,7) -- (6,0);

          \draw [very thick] (0,0) rectangle +(1,1);
          \draw [very thick] (1,1) rectangle +(2,2);
          \draw [very thick] (3,3) rectangle +(2,2);
          \draw [very thick] (5,5) rectangle +(2,2);
        \end{scope}

        \begin{scope}[xshift=-4cm,yshift=-14cm]

          \node [above] at (0.5,0) {$U_{2}$};
          \draw [fill=green!20] (0,0) rectangle +(1,7);

          \fill [fill=red!20] (0,5) rectangle +(1,2);
          
          \draw (0,1) -- (1,1);
          \draw (0,2) -- (1,2);
          \draw (0,3) -- (1,3);
          \draw (0,4) -- (1,4);
          \draw (0,5) -- (1,5);
          \draw (0,6) -- (1,6);
        \end{scope}
        
      \end{tikzpicture}
    }
    \subfloat[1st reduction.]
    {
      \begin{tikzpicture}
        [scale=0.4,
        y=-1cm]

        \begin{scope}
          [xshift=-2cm,yshift=-14cm]

          \node [above] at (3.5,0) {$B$};
          
          \draw [fill=green!20] (0,0) -- (0,1) -- (1,1) -- (1,3) -- (3,3) -- (3,5) -- (5,5) -- (5,7) -- (7,7) -- (7,0) -- cycle;

          \draw (1,1) -- (7,1); \draw (1,3) -- (1,0);
          \draw (1,2) -- (7,2); \draw (2,3) -- (2,0);
          \draw (2,3) -- (7,3); \draw (3,5) -- (3,0);
          \draw (3,4) -- (7,4); \draw (4,5) -- (4,0);
          \draw (4,5) -- (7,5); \draw (5,7) -- (5,0);
          \draw (5,6) -- (7,6); \draw (6,7) -- (6,0);

          \fill [pattern=north east lines, pattern color=black!50] (5,5) rectangle +(2,2);

          \draw [very thick] (0,0) rectangle +(1,1);
          \draw [very thick] (1,1) rectangle +(2,2);
          \draw [very thick] (3,3) rectangle +(2,2);
          \draw [very thick] (5,5) rectangle +(2,2);
        \end{scope}

        \begin{scope}[xshift=-4cm,yshift=-14cm]

          \node [above] at (0.5,0) {$U_{2}$};
          \draw (0,0) -- (0,7) -- (1,7) -- (1,0) -- cycle;
          \draw [fill=green!20] (0,5) -- ++(1,1) -- (1,0) -- (0,0) -- cycle;

          \fill [fill=red!20] (0,3) -- ++(1,0) -- ++(0,3) -- ++(-1,-1) -- cycle;
          
          \draw (0,1) -- (1,1);
          \draw (0,2) -- (1,2);
          \draw (0,3) -- (1,3);
          \draw (0,4) -- (1,4);
          \draw (0,5) -- (1,5);
          \draw (0,6) -- (1,6);

          \fill [pattern=north east lines, pattern color=black!50] (0,5) rectangle +(1,2);
        \end{scope}

      \end{tikzpicture}
    } 
    \subfloat[2nd reduction.]
    {
      \begin{tikzpicture}
        [scale=0.4,
        y=-1cm]

        \begin{scope}
          [xshift=-2cm,yshift=-14cm]

          \node [above] at (3.5,0) {$B$};
          
          \draw [fill=green!20] (0,0) -- (0,1) -- (1,1) -- (1,3) -- (3,3) -- (3,6) -- (5,6) -- (5,7) -- (7,7) -- (7,0) -- cycle;

          \draw (1,1) -- (7,1); \draw (1,3) -- (1,0);
          \draw (1,2) -- (7,2); \draw (2,3) -- (2,0);
          \draw (2,3) -- (7,3); \draw (3,5) -- (3,0);
          \draw (3,4) -- (7,4); \draw (4,6) -- (4,0);
          \draw (3,5) -- (7,5); \draw (5,7) -- (5,0);
          \draw (5,6) -- (7,6); \draw (6,7) -- (6,0);

          \fill [pattern=north east lines, pattern color=black!50] (3,3) rectangle +(4,3);

          \draw [very thick] (0,0) rectangle +(1,1);
          \draw [very thick] (1,1) rectangle +(2,2);
          \draw [very thick] (3,3) rectangle +(3,3);
          \draw [very thick] (5,5) rectangle +(2,2);
        \end{scope}
        
        \begin{scope}[xshift=-4cm,yshift=-14cm]

          \node [above] at (0.5,0) {$U_{2}$};
          \draw (0,0) -- (0,7) -- (1,7) -- (1,0) -- cycle;
          \draw [fill=green!20] (0,3) -- ++(1,1) -- (1,0) -- (0,0) -- cycle;

          \fill [fill=red!20] (0,1) -- ++(1,0) -- ++(0,3) -- ++(-1,-1) -- cycle;
          
          \draw (0,1) -- (1,1);
          \draw (0,2) -- (1,2);
          \draw (0,3) -- (1,3);
          \draw (0,4) -- (1,4);
          \draw (0,5) -- (1,5);
          \draw (0,6) -- (1,6);

          \fill [pattern=north east lines, pattern color=black!50] (0,3) rectangle +(1,3);
        \end{scope}

      \end{tikzpicture}
    } \\
    \subfloat[3rd reduction.]
    {
      \begin{tikzpicture}
        [scale=0.4,
        y=-1cm]

        \begin{scope}
          [xshift=-2cm,yshift=-14cm]

          \node [above] at (3.5,0) {$B$};
          
          \draw [fill=green!20] (0,0) -- (0,1) --(1,1) -- (1,4) -- (3,4) -- (3,6) -- (5,6) -- (5,7) -- (7,7) -- (7,0) -- cycle;

          \draw (1,1) -- (7,1); \draw (1,3) -- (1,0);
          \draw (1,2) -- (7,2); \draw (2,4) -- (2,0);
          \draw (1,3) -- (7,3); \draw (3,5) -- (3,0);
          \draw (3,4) -- (7,4); \draw (4,6) -- (4,0);
          \draw (3,5) -- (7,5); \draw (5,7) -- (5,0);
          \draw (5,6) -- (7,6); \draw (6,7) -- (6,0);

          \fill [pattern=north east lines, pattern color=black!50] (1,1) rectangle +(6,3);

          \draw [very thick] (0,0) rectangle +(1,1);
          \draw [very thick] (1,1) rectangle +(3,3);
          \draw [very thick] (3,3) rectangle +(3,3);
          \draw [very thick] (5,5) rectangle +(2,2);
        \end{scope}
              
        \begin{scope}[xshift=-4cm,yshift=-14cm]

          \node [above] at (0.5,0) {$U_{2}$};
          \draw (0,0) -- (0,7) -- (1,7) -- (1,0) -- cycle;
          \draw [fill=green!20] (0,1) -- ++(1,1) -- (1,0) -- (0,0) -- cycle;

          \fill [fill=red!20] (0,0) -- ++(1,0) -- ++(0,2) -- ++(-1,-1) -- cycle;
          
          \draw (0,1) -- (1,1);
          \draw (0,2) -- (1,2);
          \draw (0,3) -- (1,3);
          \draw (0,4) -- (1,4);
          \draw (0,5) -- (1,5);
          \draw (0,6) -- (1,6);

          \fill [pattern=north east lines, pattern color=black!50] (0,1) rectangle +(1,3);
        \end{scope}
        
      \end{tikzpicture}
    }
    \subfloat[4th reduction.]
    {
      \begin{tikzpicture}
        [scale=0.4,
        y=-1cm]

        \begin{scope}
          [xshift=-2cm,yshift=-14cm]

          \node [above] at (3.5,0) {$B$};
          
          \draw [fill=green!20] (0,0) -- (0,2) -- (1,2) -- (1,4) -- (3,4) -- (3,6) -- (5,6) -- (5,7) -- (7,7) -- (7,0) -- cycle;

          \draw (0,1) -- (7,1); \draw (1,3) -- (1,0);
          \draw (1,2) -- (7,2); \draw (2,4) -- (2,0);
          \draw (1,3) -- (7,3); \draw (3,5) -- (3,0);
          \draw (3,4) -- (7,4); \draw (4,6) -- (4,0);
          \draw (3,5) -- (7,5); \draw (5,7) -- (5,0);
          \draw (5,6) -- (7,6); \draw (6,7) -- (6,0);

          \fill [pattern=north east lines, pattern color=black!50] (0,0) rectangle +(7,2);

          \draw [very thick] (0,0) rectangle +(2,2);
          \draw [very thick] (1,1) rectangle +(3,3);
          \draw [very thick] (3,3) rectangle +(3,3);
          \draw [very thick] (5,5) rectangle +(2,2);
        \end{scope}
              
        \begin{scope}[xshift=-4cm,yshift=-14cm]

          \node [above] at (0.5,0) {$U_{2}$};
          \draw (0,0) -- (0,7) -- (1,7) -- (1,0) -- cycle;
          \draw [fill=green!20] (0,0) -- ++(1,1) -- (1,0) -- cycle;

          \draw (0,1) -- (1,1);
          \draw (0,2) -- (1,2);
          \draw (0,3) -- (1,3);
          \draw (0,4) -- (1,4);
          \draw (0,5) -- (1,5);
          \draw (0,6) -- (1,6);

          \fill [pattern=north east lines, pattern color=black!50] (0,0) rectangle +(1,2);
        \end{scope}
        
      \end{tikzpicture}
    }

    \caption{Reduction of $U_{2}$ to upper triangular form by successive QR decompositions and its effect on the shape of $B$.
      The diagonal patterns show what has been modified from the previous configuration.
      The thick lines aim to clarify the block structure.
      The red regions identify the sub-matrices of $U_{2}$ that will be reduced in the next step.} \label{fig:accelqr1}
  \end{figure}

\end{paragraph}

\begin{paragraph}{Block triangular reduction of $B$ from the left}

  The matrix $B$ must now be reduced back to block triangular form.
  The procedure is analogous to the one previously described but this time the transformations are applied from the left, and, once again, we have to be careful with the alignment of the blocks.
  Starting from the initial configuration illustrated in Figure~\ref{fig:accelqr2}~a) for $\ell=3$, the leading $2\nb \times \nb$ sub-matrix is fully reduced to upper triangular form.
  Subsequent steps of the reduction, illustrated in Figure~\ref{fig:accelqr2}~(b)--(d), use QR decompositions of $3\nb \times 2\nb$ sub-matrices to reduce the last $\nb$ rows of each block.
  
  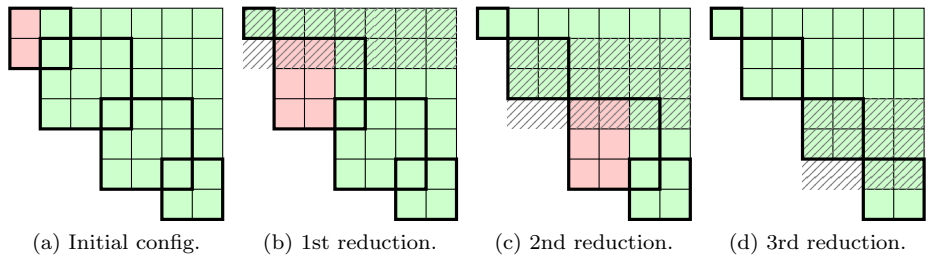
\begin{figure}[htb] 
    \centering
    \subfloat[Initial config.]
    {
      \begin{tikzpicture}
        [scale=0.4,
        y=-1cm]

        \begin{scope}
          [xshift=-2cm,yshift=-14cm]

          \draw [fill=green!20] (0,0) -- (0,2) -- (1,2) -- (1,4) -- (3,4) -- (3,6) -- (5,6) -- (5,7) -- (7,7) -- (7,0) -- cycle;

          \draw [fill=red!20] (0,0) rectangle +(1,2);

          \draw (0,1) -- (7,1); \draw (1,3) -- (1,0);
          \draw (1,2) -- (7,2); \draw (2,4) -- (2,0);
          \draw (1,3) -- (7,3); \draw (3,5) -- (3,0);
          \draw (3,4) -- (7,4); \draw (4,6) -- (4,0);
          \draw (3,5) -- (7,5); \draw (5,7) -- (5,0);
          \draw (5,6) -- (7,6); \draw (6,7) -- (6,0);

          \draw [very thick] (0,0) rectangle +(2,2);
          \draw [very thick] (1,1) rectangle +(3,3);
          \draw [very thick] (3,3) rectangle +(3,3);
          \draw [very thick] (5,5) rectangle +(2,2);
        \end{scope}
      \end{tikzpicture}
    } 
    \subfloat[1st reduction.]
    {
      \begin{tikzpicture}
        [scale=0.4,
        y=-1cm]

        \begin{scope}
          [xshift=-2cm,yshift=-14cm]

          \draw [fill=green!20] (0,0) -- (0, 1) -- (1,1) -- (1,4) -- (3,4) -- (3,6) -- (5,6) -- (5,7) -- (7,7) -- (7,0) -- cycle;

          \draw [fill=red!20] (1,1) rectangle +(2,3);

          \draw (1,1) -- (7,1); \draw (1,3) -- (1,0);
          \draw (1,2) -- (7,2); \draw (2,4) -- (2,0);
          \draw (1,3) -- (7,3); \draw (3,5) -- (3,0);
          \draw (3,4) -- (7,4); \draw (4,6) -- (4,0);
          \draw (3,5) -- (7,5); \draw (5,7) -- (5,0);
          \draw (5,6) -- (7,6); \draw (6,7) -- (6,0);

          \fill [pattern=north east lines, pattern color=black!50] (0,0) rectangle +(7,2);

          \draw [very thick] (0,0) rectangle +(1,1);

          \draw [very thick] (1,1) rectangle +(3,3);
          \draw [very thick] (3,3) rectangle +(3,3);
          \draw [very thick] (5,5) rectangle +(2,2);
        \end{scope}
      \end{tikzpicture}
    }
    \subfloat[2nd reduction.]
    {
      \begin{tikzpicture}
        [scale=0.4,
        y=-1cm]

        \begin{scope}
          [xshift=-2cm,yshift=-14cm]

          \draw [fill=green!20] (0,0) -- (0,1) -- (1,1) -- (1,3) -- (3,3) -- (3,6) -- (5,6) -- (5,7) -- (7,7) -- (7,0) -- cycle;

          \draw [fill=red!20] (3,3) rectangle +(2,3);

          \draw (1,1) -- (7,1); \draw (1,3) -- (1,0);
          \draw (1,2) -- (7,2); \draw (2,3) -- (2,0);
          \draw (1,3) -- (7,3); \draw (3,5) -- (3,0);
          \draw (3,4) -- (7,4); \draw (4,6) -- (4,0);
          \draw (3,5) -- (7,5); \draw (5,7) -- (5,0);
          \draw (5,6) -- (7,6); \draw (6,7) -- (6,0);

          \fill [pattern=north east lines, pattern color=black!50] (1,1) rectangle +(6,3);

          \draw [very thick] (0,0) rectangle +(1,1);

          \draw [very thick] (1,1) rectangle +(2,2);
          \draw [very thick] (3,3) rectangle +(3,3);
          \draw [very thick] (5,5) rectangle +(2,2);
        \end{scope}
      \end{tikzpicture}
    }
    \subfloat[3rd reduction.]
    {
      \begin{tikzpicture}
        [scale=0.4,
        y=-1cm]

        \begin{scope}
          [xshift=-2cm,yshift=-14cm]

          \draw [fill=green!20] (0,0) -- (0,1) -- (1,1) -- (1,3) -- (3,3) -- (3,5) -- (5,5) -- (5,7) -- (7,7) -- (7,0) -- cycle;

          \draw (1,1) -- (7,1); \draw (1,3) -- (1,0);
          \draw (1,2) -- (7,2); \draw (2,3) -- (2,0);
          \draw (1,3) -- (7,3); \draw (3,5) -- (3,0);
          \draw (3,4) -- (7,4); \draw (4,5) -- (4,0);
          \draw (3,5) -- (7,5); \draw (5,7) -- (5,0);
          \draw (5,6) -- (7,6); \draw (6,7) -- (6,0);

          \fill [pattern=north east lines, pattern color=black!50] (3,3) rectangle +(4,3);

          \draw [very thick] (0,0) rectangle +(1,1);

          \draw [very thick] (1,1) rectangle +(2,2);
          \draw [very thick] (3,3) rectangle +(2,2);
          \draw [very thick] (5,5) rectangle +(2,2);
        \end{scope}
      \end{tikzpicture}
    }

    \caption{Successive reduction of $B$ to block triangular form.
      The diagonal patterns show what has been modified from the previous configuration.
      The thick lines aim to clarify the block structure.
      The red regions identify the sub-matrix of $B$ that will be reduced in the next step.} \label{fig:accelqr2}
  \end{figure}

  In Figure~\ref{fig:accelrq1}~(a) we assumed that the initial shape of $B$ is upper triangular.
  This will be the case only for the first absorption.
  In all subsequent absorptions, the initial shape of $B$ will be as illustrated in Figure~\ref{fig:accelqr2}~(d): when $\ell = 3$, the top-left block may have dimension $p \times p$ with $0 < p \leq 2\nb$, while all the remaining diagonal blocks will be $2\nb \times 2\nb$.
  The first step in the reduction of $V_{2}$ will therefore have to be aligned to respect the block structure of $B$, just as it was the case with the first step of the reduction of $U_2$.

\end{paragraph}

\begin{paragraph}{Handling of iterative refinement failures}
  
  Ideally, reflectors are absorbed only after $k = \nb$ reflectors have been accumulated, i.e., never earlier due to iterative refinement failures.
  In practice, however, failures will occur and as a consequence the details of the procedure described above will need to be adjusted slightly.
  Suppose that iterative refinement fails after accumulating $k < \nb$ reflectors.
  The input matrix $B$ will be (either triangular or) block triangular with diagonal blocks of size $2 \nb \times 2 \nb$ (again, we discuss only the case $\ell = 3$).
  The matrix $V_{2}$ (which has $k$ columns) is reduced using sub-matrices (normally) consisting of $2 \nb + k$ rows.
  The effect on $B$ (\change{cf.~}Figure~\ref{fig:accelrq1}) will be to grow the diagonal blocks from $2 \nb$ to $2 \nb + k$.
  The first $k$ columns of these diagonal blocks are then reduced just as before (\change{cf.~}Figure~\ref{fig:accelrq2}) but this time the RQ decompositions will be computed from sub-matrices of size $2 \nb \times (2 \nb + k)$, i.e., from sub-matrices with $\nb - k$ fewer columns than before.
  Note that the final WY transformations will involve only $k$ reflectors (instead of $\nb$), which is important for the sake of efficiency.
  Similarly, when reducing $U_{2}$ the sub-matrices normally consist of $2 \nb + k$ rows and the diagonal blocks of $B$ will grow by $k$ once more (\change{cf.~}Figure~\ref{fig:accelqr1}).
  The block triangular structure of $B$ is finally restored by transformations consisting of $k$ reflectors (\change{cf.~}Figure~\ref{fig:accelqr2}).

\end{paragraph}

\begin{paragraph}{Impact on Algorithm~\ref{alg:main}}
  
  The impact of the block triangular form in Figure~\ref{fig:accelqr2}~(d) on Algorithm~\ref{alg:main} is minor.
  Aside from modifying the way in which reflectors are absorbed (as described above), the only other necessary change is to modify the implicit reduction of column $j + 1$ of $B$ to accommodate a \emph{block} triangular matrix.
  In particular, the residual computation will involve multiplication with a \emph{block} triangular matrix instead of a triangular matrix and the solve will require \emph{block} backwards substitution instead of regular backwards substitution.
  The block backwards substitution is carried out by computing an LU decomposition (with partial pivoting) once for each diagonal block and then reusing the decompositions for each of the (up to) $k$ solves leading up to the next wave of absorption.

  \change{The optimal value of $\ell$ is a trade-off between efficiency in the reduction of $V_2$ and $U_2$, and efficiency in solving linear systems with $B$ having large diagonal blocks. A larger value of $\ell$ helps speed up the former, but results in increased fill-in in $B$, as well as in increased time needed for computing LU factorizations of its large diagonal blocks.}

\end{paragraph}


\section{Numerical Experiments} \label{sec:experiments}

To test the performance of our newly proposed HouseHT algorithm, we implemented it in C++ and executed it on two different machines using different BLAS implementations.
We compare with the LAPACK routine DGGHD3, which implements the block-oriented Givens-based algorithm from~\cite{Kagstrom2008} and can be considered state of the art, as well as the predecessor LAPACK routine DGGHRD, which implements the original Givens-based algorithm from~\cite{Moler1973}.
We created four test suites in order to explore the behavior of the new algorithm on a wide range of matrix pencils.
For each test pair, the correctness of the output was verified by checking the resulting matrix structure and by computing $\|{H} - {Q}^T A {Z}\|_F$ and $\|{T} - {Q}^T B {Z}\|_F$. 

\change{Table \ref{tab:computingenvironments}} describes the computing environments used in our tests. The last row illustrates the relative performance of the machine/BLAS combinations,
measuring the timing of the DGGHD3 routine for a random pair of dimension $4000$, and rescaling so that the time for pascal with MKL is normalized to $1.00$.

\begin{table}
    \caption{\label{tab:computingenvironments} Computing environments used for testing.}
    \begin{center} \scriptsize 
	\begin{tabular}{|c|c|c|c|c|}\hline
		machine name     & \multicolumn{2}{c|}{pascal}                                            & \multicolumn{2}{c|}{kebnekaise} \\\hline
		processor        & \multicolumn{2}{c|}{2x Intel Xeon E5-2690v3                          } & \multicolumn{2}{c|}{2x Intel Xeon E5-2690v4} \\
		                 & \multicolumn{2}{c|}{                        (12 cores each, $2.6$GHz)} & \multicolumn{2}{c|}{(14 cores each, $2.6$GHz)} \\\hline
		RAM              & \multicolumn{2}{c|}{$256$GB}                                           & \multicolumn{2}{c|}{$128$GB} \\\hline
		operating system & \multicolumn{2}{c|}{Centos $7.3$}                                      & \multicolumn{2}{c|}{Ubuntu $16.04$} \\\hline
		BLAS library     & MKL $11.3.3$                                                           & OpenBLAS $0.2.19$                        & MKL $2017.3.196$ & OpenBLAS 0.2.20 \\\hline
		compiler         & icpc $16.0.3$                                                          & g++ $4.8.5$                              & g++ $6.4.0$  & g++ $6.4.0$ \\\hline
        \change{\newchange{DGGHD3} parameters} & \change{$nb=144$} & \change{$nb=192$} & \change{$nb=128$} & \change{$nb=176$} \\\hline
        \change{\newchange{HouseHT} parameters} & \change{$nb=128$, $\ell=4$} & \change{$nb=128$, $\ell=4$} & \change{$nb=64$, $\ell=5$} & \change{$nb=80$, $\ell=4$} \\\hline
        relative timing  & $1.00$                                                                 & 1.38                                     & 0.77 & 0.88 \\ \hline
	\end{tabular}
    \end{center}
\end{table}

For each computing environment, the optimal block sizes for HouseHT and DGGHD3 were first estimated empirically and then used in all four test suites; \change{Table~\ref{tab:computingenvironments} lists those that turned out to be optimal for larger matrices ($n \geq 3500$).}
Unless otherwise stated, we use only a single core and link to single-threaded BLAS.
All timings include the accumulation of orthogonal transformations into $Q$ and $Z$.
\\

\begin{paragraph}{Test Suite 1: Random matrix pencils}
 The first test suite consists of random matrix pencils. More specifically, the matrix $A$ has normally distributed entries while the matrix $B$ is chosen as the triangular factor of the QR decomposition of a matrix with normally distributed entries.
This test suite is designed to illustrate the behavior of the algorithm for a ``non-problematic'' input with no infinite eigenvalues and a fairly well-conditioned matrix $B$.
For such inputs, the HouseHT algorithm typically needs no iterative refinement steps when solving linear systems.

Figure \ref{fig:suite1-fig1} displays the execution time of HouseHT divided by the execution  time of DGGHD3 for the different computing environments.
The new algorithm has roughly the same performance as DGGHD3, being from about $20\%$ faster to about $35\%$ slower than DGGHD3, depending on the machine/BLAS combination. 
Both algorithms exhibit far better performance than the LAPACK routine DGGHRD, which makes little use of \change{level-3 BLAS} due to its non-blocked nature. 

Figure \ref{fig:suite1-fig2} shows the flop-rates of HouseHT and DGGHD3 for the pascal machine with MKL BLAS.
Although the running times are about the same, the new algorithm computes about twice as many floating point operations, so the resulting
flop-rate is about two times higher than DGGHD3. The flop-counts were obtained during the execution of the algorithm
by interposing calls to the LAPACK and BLAS routines and instrumenting the code.

\begin{figure}[H]
    \begin{minipage}{1.0\textwidth}
        \centering
        \subfloat[Execution time of HouseHT and DGGHRD relative to execution  time of DGGHD3.]
            {\includegraphics[width=.47\textwidth]{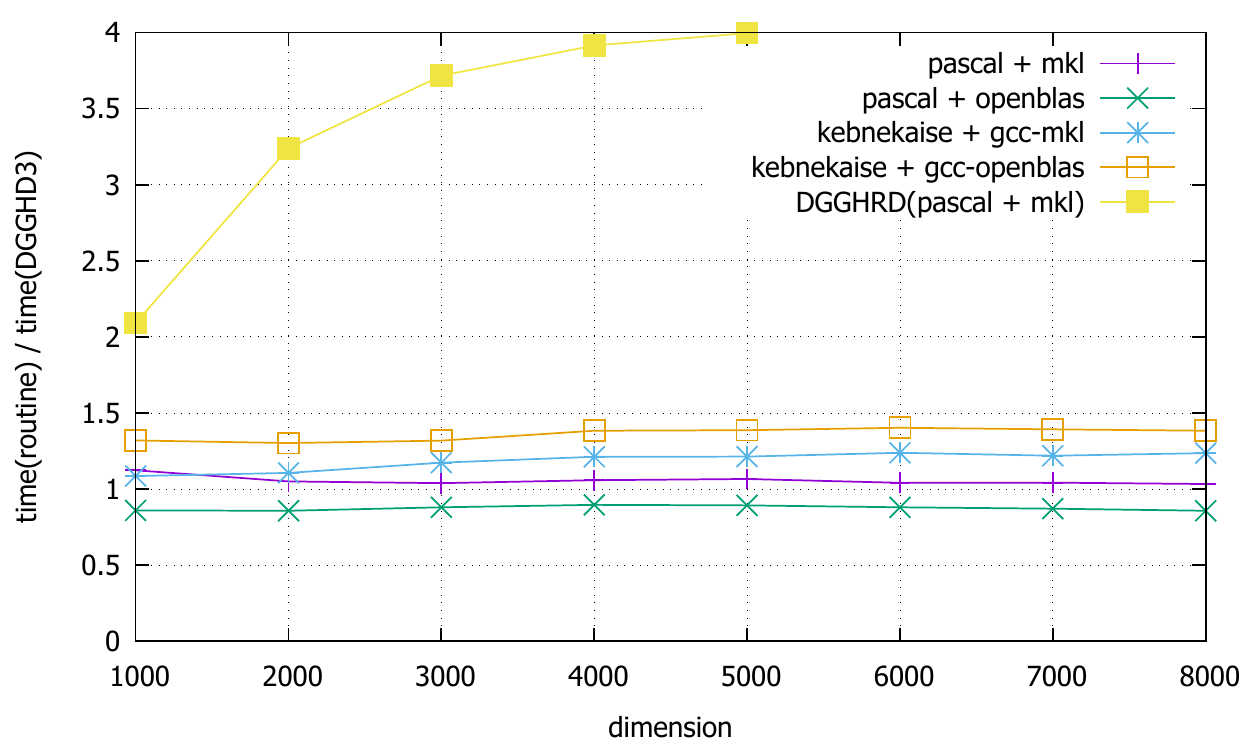}
            \label{fig:suite1-fig1}}
        \hfill
        \subfloat[Flop-rate of HouseHT and DGGHD3 on the pascal machine with MKL BLAS.]
            {\includegraphics[width=.47\textwidth]{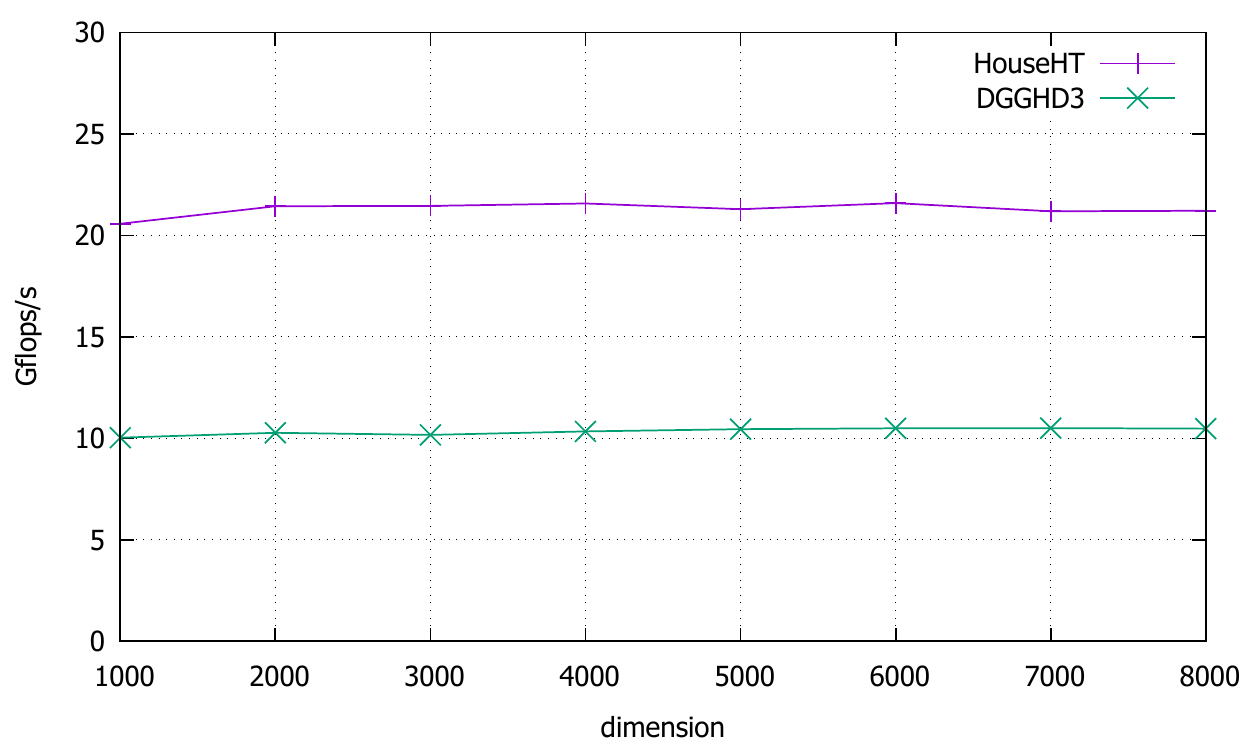}
            \label{fig:suite1-fig2}}

        \caption{Single-core performance of HouseHT for randomly generated matrix pencils (Test Suite~1).}
    \end{minipage}
\end{figure}

\change{Table~\ref{tab:subroutines}} shows the fraction of the time that HouseHT spends in the three most computationally expensive parts of the algorithm. 
The results are from the pascal machine with MKL BLAS and $n=8000$. 

\begin{table}[H]
    \caption{\label{tab:subroutines} Relative timings of the HouseHT subroutines.}
    \begin{minipage}{1.0\textwidth}
        \centering
\begin{tabular}{|c|c|}\hline
    part of HouseHT                                           & \% of total time \\ \hline\hline
    solving systems with $B$, computing residuals             & 22.82\% \\ \hline
    absorption of reflectors                                  & 57.40\% \\ \hline
    assembling $Y=AVT$                                        & 19.61\% \\ \hline
\end{tabular}
    
    \end{minipage}
\end{table}


\noindent
\begin{minipage}{1.0\textwidth}
HouseHT spends as much as 92.60\% of its flops
(and 52.77\% of its time)
performing level 3 BLAS operations,
compared to DGGHD3 which spends only 65.35\% of its flops
(and 18.33\% of its time) in level 3 BLAS operations.
    
\end{minipage}

\end{paragraph}

\begin{paragraph}{Test Suite 2: Matrix pencils from benchmark collections}
The purpose of the second test suite is to demonstrate the performance of HouseHT for matrix pencils originating from a variety of applications. To this end, we applied HouseHT and DGGHD3 to a number of pencils from the 
benchmark collections \cite{MMarket2007, Chahlaoui2002,Korvink2005}.
Table~\ref{tab:benchmarks} displays the obtained results for the pascal machine with MKL BLAS. 
When constructing the Householder reflector for reducing a column of $B$ in HouseHT, the percentage of columns that require iterative refinement varies strongly for the different examples. Typically, at most one or two steps of iterative refinement are necessary to achieve numerical stability. It is important to note that we did not observe a single failure, all linear systems were successfully solved in less than $10$ iterations. 

As can be seen from Table~\ref{tab:benchmarks}, HouseHT brings little to no benefit over DGGHD3 on a single core of pascal with MKL. A first indication of the benefits HouseHT may bring for several cores is seen by comparing the third and the fourth columns of the table. By switching to multithreaded BLAS and using eight cores, then for sufficiently large matrices HouseHT becomes significantly faster than DGGHD3.

\change{The percentage} of columns for which an extra IR step is required depends slightly on the machine/BLAS combination due to different block size configurations; typically, it does not differ by much, and difficult examples remain difficult. \change{(The fifth column of Table~\ref{tab:benchmarks} may be used here as the difficulty indicator.)}
    The performance of HouseHT \change{vs.~}DGGDH3 does vary more, as Figure \ref{fig:suite1-fig1} suggests. We briefly summarize the findings of the numerical experiments:
    when the algorithms are run on a single core, the ratios shown in the \change{third} column of \change{Table~\ref{tab:benchmarks}} are, on average, about 20\% smaller for pascal/OpenBLAS, about 5\% larger for kebnekaise/MKL, and about 28\% larger for kebnekaise/OpenBLAS. 
    When the algorithms are run on $8$ cores, the HouseHT algorithm \change{increasingly outperforms} DGGHD3 with the increasing matrix size, regardless of the machine/BLAS combination. On average, the ratios shown in the \change{fourth} column are about 38\% smaller for pascal/OpenBLAS, about 14\% larger for kebnekaise/OpenBLAS, and about 50\% larger for kebnekaise/MKL.
    \\

\begin{table}
 \caption{\label{tab:benchmarks} Execution time of HouseHT relative to DGGHRD for various benchmark examples (Test Suite~2), on a single core and on eight cores (\change{pascal/MKL}). }
\centering
\begin{tabular}{|c|c|c|c|c|c|}\hline name & $n$ & \parbox{2.37cm}{\small time(HouseHT)/ time(DGGHD3) (1 core)} & \parbox{2.37cm}{\small time(HouseHT)/ time(DGGHD3) (8 cores)} & \parbox{1.5cm}{\small \%\! columns with extra IR steps} & \parbox{1.5cm}{\small avg.~\#IR steps per column} \\ \hline
	BCSST20         & $  485$ & $ 1.30$ & $ 1.36$ & $52.58$ & $ 0.52$ \\
	MNA\_1          & $  578$ & $ 1.04$ & $ 1.31$ & $42.39$ & $ 1.02$ \\
	BFW782          & $  782$ & $ 1.18$ & $ 0.90$ & $ 0.00$ & $ 0.00$ \\
	BCSST19         & $  817$ & $ 0.98$ & $ 1.03$ & $55.57$ & $ 0.55$ \\
	MNA\_4          & $  980$ & $ 1.05$ & $ 0.91$ & $34.39$ & $ 0.42$ \\
	BCSST08         & $ 1074$ & $ 1.11$ & $ 0.99$ & $15.08$ & $ 0.15$ \\
	BCSST09         & $ 1083$ & $ 1.13$ & $ 0.93$ & $43.49$ & $ 0.43$ \\
	BCSST10         & $ 1086$ & $ 1.17$ & $ 0.85$ & $16.94$ & $ 0.17$ \\
	BCSST27         & $ 1224$ & $ 1.11$ & $ 0.74$ & $24.43$ & $ 0.24$ \\
	RAIL            & $ 1357$ & $ 1.03$ & $ 0.71$ & $ 0.52$ & $ 0.00$ \\
	SPIRAL          & $ 1434$ & $ 1.04$ & $ 0.68$ & $ 0.00$ & $ 0.00$ \\
	BCSST11         & $ 1473$ & $ 1.05$ & $ 0.67$ & $ 7.81$ & $ 0.08$ \\
	BCSST12         & $ 1473$ & $ 1.03$ & $ 0.67$ & $ 1.29$ & $ 0.01$ \\
	FILTER          & $ 1668$ & $ 1.03$ & $ 0.62$ & $ 0.36$ & $ 0.00$ \\
	BCSST26         & $ 1922$ & $ 1.05$ & $ 0.58$ & $20.29$ & $ 0.20$ \\
	BCSST13         & $ 2003$ & $ 1.05$ & $ 0.59$ & $26.21$ & $ 0.28$ \\
	PISTON          & $ 2025$ & $ 1.06$ & $ 0.57$ & $20.79$ & $ 0.27$ \\
	BCSST23         & $ 3134$ & $ 1.19$ & $ 0.56$ & $72.59$ & $ 0.73$ \\
	MHD3200         & $ 3200$ & $ 1.16$ & $ 0.54$ & $26.97$ & $ 0.27$ \\
	BCSST24         & $ 3562$ & $ 1.19$ & $ 0.54$ & $46.97$ & $ 0.47$ \\
	BCSST21         & $ 3600$ & $ 1.11$ & $ 0.48$ & $11.53$ & $ 0.11$ \\
	\hline
\end{tabular}
\end{table}
\end{paragraph}

\begin{paragraph}{Test Suite 3: Potential for parallelization}
The purpose of the third test is a more detailed exploration of the potential benefits the new algorithm may achieve in a parallel environment. For this purpose, we link HouseHT with a multithreaded BLAS library. Let us emphasize that this is purely \change{exploratory}.
\change{There are possibilities for parallelism outside of BLAS invocations; this is subject to future work.}
Figure \ref{fig:suite2-fig1} shows the speedup of the HouseHT algorithm achieved relative to DGGHD3 for an increasing number of cores.
We have used \change{$8000\times 8000$} matrix pencils, generated as in Test Suite~1.
As shown in Figure \ref{fig:suite2-fig2}, the
performance of DGGHD3, unlike the new algorithm, barely benefits from switching to multithreaded BLAS.
\end{paragraph}

\begin{figure}[H]
    \begin{minipage}{1.0\textwidth}
        \centering
        \subfloat[Execution time of DGGHD3 relative to HouseHT.]
            {\includegraphics[width=.47\textwidth]{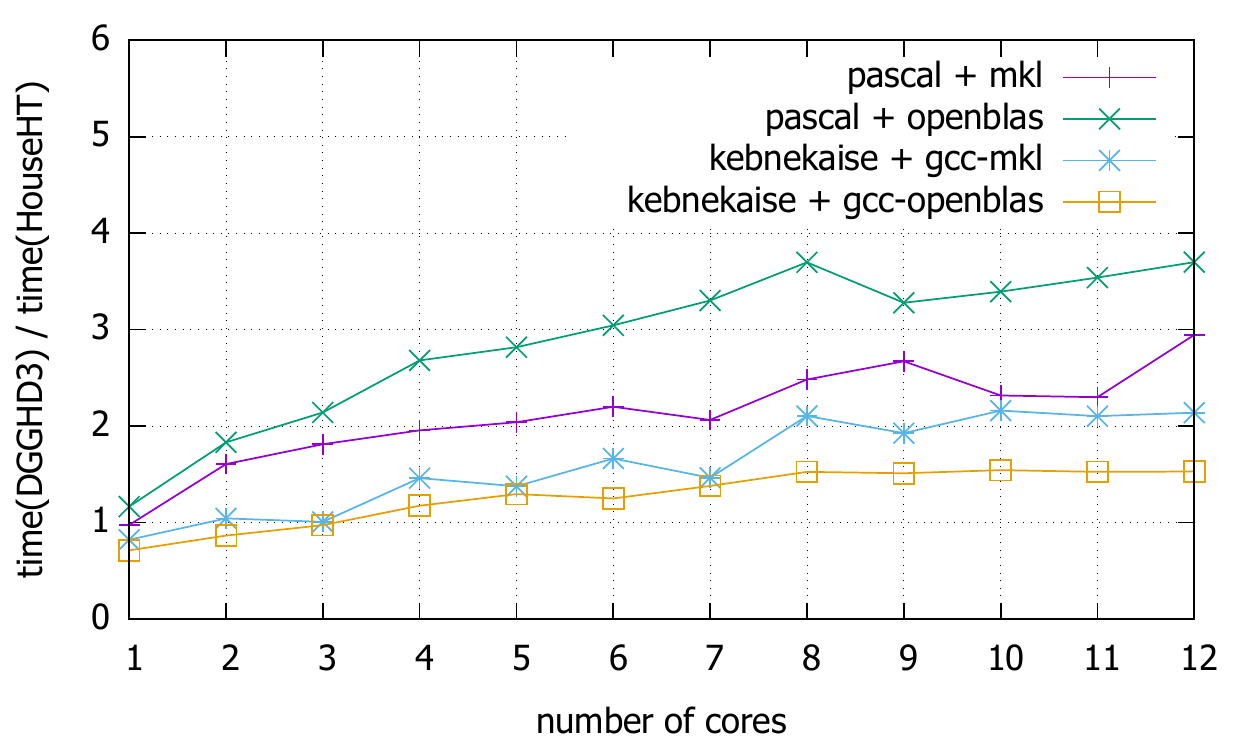}
            \label{fig:suite2-fig1}}
        \hfill
        \subfloat[Execution times of HouseHT and DGGHD3 on several cores relative to the single-core execution time (pascal+MKL).]
            {\includegraphics[width=.47\textwidth]{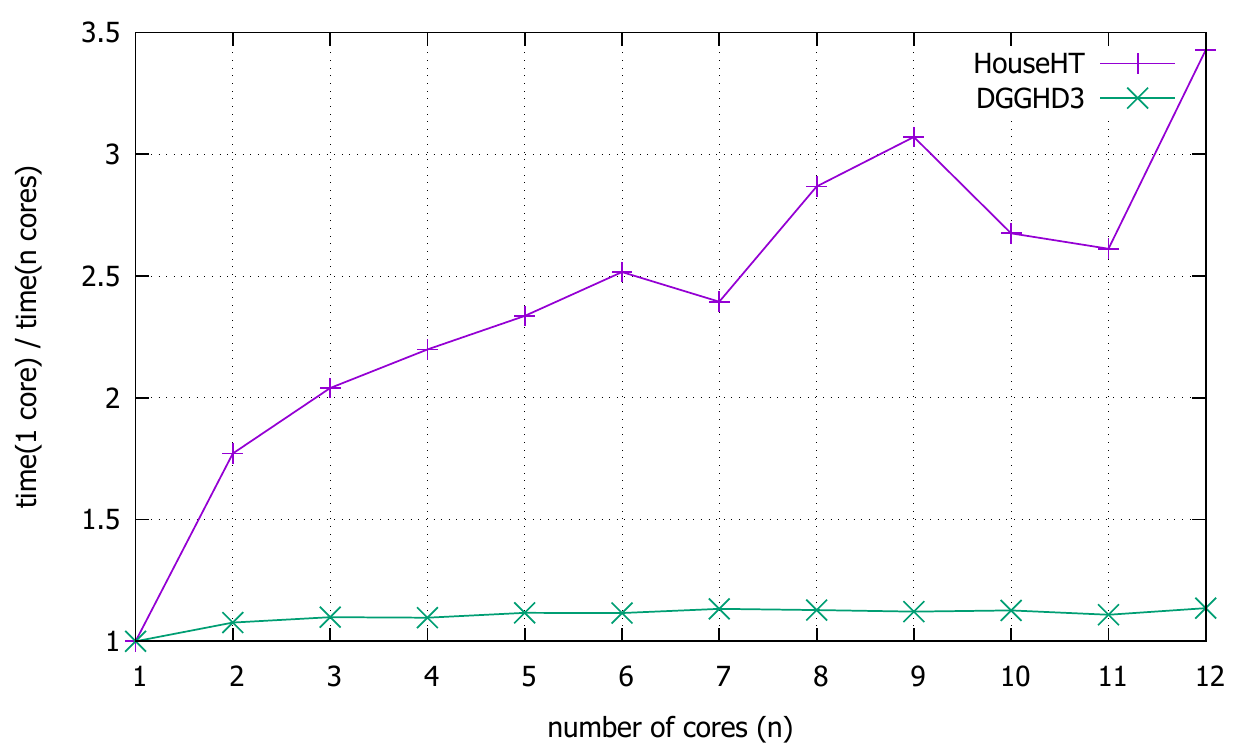}
            \label{fig:suite2-fig2}}

        \caption{Performance of HouseHT for randomly generated matrix pencils of dimension $8000$ (Test Suite~3) when using multithreaded BLAS.}
    \end{minipage}
\end{figure}

\begin{paragraph}{Test Suite 4: Saddlepoint matrix pencils}
 The final test suite consists of matrix pencils designed to be particularly unfavorable for HouseHT.
Let
$$
  A-\lambda B = \left[ \begin{array}{cc} X & Y \\ Y^T & 0 \end{array}\right]- \lambda \left[ \begin{array}{cc} I & 0 \\ 0 & 0 \end{array}\right],
$$
with a random \change{symmetric} positive definite matrix $X$ and a random (full-rank) matrix $Y$ with sizes chosen such that $X$ is $3/4$th the \change{dimension} of $A$.
The matrix $B$ is split accordingly.
For such matrix pencils, with many infinite eigenvalues, we expect that HouseHT will struggle with solving linear systems, requiring many steps of iterative refinement
and being forced to prematurely absorb reflectors.
This is, up to a point, what happens when we run the test suite. In Table~\ref{tab:infinite}, we see that HouseHT may be up to $4$ times slower than DGGHD3 (on pascal/MKL) for smaller-sized matrix pencils. For about $5\%$ of the columns the linear systems cannot be solved in a stable manner, even with the
help of iterative refinement. In turn, the reflectors have to be repeatedly absorbed prematurely. 
However, in all of these cases, HouseHT still manages to successfully produce the Hessenberg-triangular form to full precision.

For example, for $n=4000$, there are $67$ columns for which the linear system cannot be solved with $10$ steps of iterative
refinement. The failure happens more frequently in the beginning of the algorithm: 
it occurs $14$ times within the first $100$ columns, only $6$ times after the $700$th column,
and the last occurrence is at the $1082$nd column. 
The same observation can be made for columns requiring extra (but fewer than 10) steps of IR; the last such column is the $2105$th column.

For this, and many similar test cases, using the preprocessing of the zero columns as described in Section~\ref{sec:preprocessing} 
may convert a difficult test case to a very easy one. The numbers in parentheses in Table~\ref{tab:infinite} show the effect of preprocessing for
the saddlepoint pencils. Note that we do not preprocess the input for DGGHD3 (which would benefit from it as well).
With preprocessing on, there is barely any need for iterative refinement despite the fact that it does not remove all of the infinite eigenvalues.

\begin{table}
\caption{\label{tab:infinite} Execution time of HouseHT relative to DGGHD3 for matrix pencils with saddlepoint structure (Test Suite~4), \change{run on pascal/MKL}. The numbers in parentheses correspond to results obtained with preprocessing.}
	\centering
\begin{tabular}{|c|c|c|c|c|}\hline
    $n$ & \parbox{2.37cm}{\small time(HouseHT)/ time(DGGHD3)} & \parbox{2.37cm}{\small \% columns with failed IR} & \parbox{2.37cm}{\small \% columns with extra IR steps} & \parbox{2.4cm}{\small average \#IR steps per column} \\ \hline
	$ 1000$ & $ 2.52$ ($ 0.91$) & $ 4.90$ ($ 0.10$) & $27.40$ ($ 0.40$) & $ 2.40$ ($ 0.02$) \\
	$ 2000$ & $ 1.72$ ($ 0.79$) & $ 1.60$ ($ 0.00$) & $32.55$ ($ 0.80$) & $ 1.95$ ($ 0.06$) \\
	$ 3000$ & $ 2.01$ ($ 0.77$) & $ 1.73$ ($ 0.00$) & $35.20$ ($ 0.77$) & $ 2.06$ ($ 0.07$) \\
	$ 4000$ & $ 2.14$ ($ 0.78$) & $ 1.55$ ($ 0.00$) & $37.10$ ($ 0.72$) & $ 2.06$ ($ 0.06$) \\
	$ 5000$ & $ 2.03$ ($ 0.78$) & $ 1.24$ ($ 0.00$) & $35.70$ ($ 0.72$) & $ 1.86$ ($ 0.06$) \\
	$ 6000$ & $ 1.91$ ($ 0.77$) & $ 0.77$ ($ 0.00$) & $38.00$ ($ 0.62$) & $ 1.83$ ($ 0.01$) \\
	$ 7000$ & $ 1.89$ ($ 0.76$) & $ 0.77$ ($ 0.00$) & $35.76$ ($ 0.67$) & $ 1.72$ ($ 0.05$) \\
	$ 8000$ & $ 1.78$ ($ 0.77$) & $ 0.54$ ($ 0.00$) & $36.30$ ($ 0.66$) & $ 1.65$ ($ 0.04$) \\
	\hline
\end{tabular}
\end{table}
\end{paragraph}


\section{Conclusions and future work}
\label{sec:conclusions}

We described a new algorithm for Hessen\-berg-triangular reduction.
The algorithm relies on the unconventional and little-known possibility to use a Householder reflector applied from the \emph{right} to reduce a matrix \emph{column}~\cite{Watkins2000}.
In contrast, the current state of the art is entirely based on Givens rotations~\cite{Kagstrom2008}.

We \change{observed the algorithm to be numerically backward stable} but its performance may degrade when presented with a difficult problem.
Extensive experiments on synthetic as well as real examples suggest that performance degradation is not a significant concern in practice and that simple preprocessing measures can be applied to greatly reduce the negative effects.

Compared with the state of the art~\cite{Kagstrom2008}, the new algorithm requires a small constant factor more floating point arithmetic operations but on the other hand these operations occur in computational patterns that allow for faster flop rates \change{(i.e., higher arithmetic intensity)}.
In other words, the negative impact of the additional flops is at least partially counteracted by the increased speed by which these flops can be performed.
Experiments suggest that the sequential performance of the new algorithm is comparable to the state of the art.

The primary motivation for developing the new algorithm was its potential for greater parallel scalability than the state-of-the-art parallel algorithm~\cite{Bjoern16}.
Early experiments using \change{multithreaded} BLAS support this idea.
Therefore, the design and evaluation of a task-based parallel algorithm is our next step.

\section*{Acknowledgments}

This research was conducted using the resources of High Performance Computing Center North (HPC2N).
Part of this work was done while the first author was a postdoctoral researcher at \'{E}cole polytechnique f\'{e}d\'{e}rale de Lausanne, Switzerland.

\bibliographystyle{plain}
\bibliography{anchp}

\appendix 



\section{Error analysis of basic Householder-based Hessenberg-tri\-an\-gu\-lar reduction} \label{app:error}

In this section, we perform an error analysis of the basic algorithm outlined in the beginning of Section~\ref{sec:overview}. For this purpose, we provide a formal description in Algorithm~\ref{alg:basichtred}. Given a vector $y$ of length $k$ the function {\tt House}$(y)$ used in Algorithm~\ref{alg:basichtred} returns a Householder reflector $F$ such that the last $k-1$ entries of $Fy$ are zero.

\begin{algorithm2e}[htbp]  
  \caption{$[H, T, Q, Z] = \mathtt{BasicHouseHT}(A, B)$}
  \label{alg:basichtred}
  \tcp{Initialize}
  $Q \gets I_n$, $Z \gets I_n$\;
  \tcp{For each column to reduce in $A$}
  \For{$j = 1:n-2$}
  {
    \tcp{Reduce column $j$ of $A$}
    Construct Householder reflector $F_j = I_j \oplus {\tt House}(A_{j+1:n,j})$\; \label{step:lefthh}
    
    Update $A \gets F_j A $, $B \gets F_j B$, $Q\gets Q F_j$\; \label{step:applylefthh}
    
    Set $A_{j+2:n,j} \gets 0$\; \label{step:setAzero}
    
    \tcp{Reduce column $j+1$ of $B$}
    Solve linear system $B_{j+1:n,j+1:n} x = e_{1}$\; \label{step:solveB}
    
    Construct Householder reflector $G_j = I_j\oplus {\tt House}(x)$\;
    
    Update $A \gets A G_j$, $B \gets B G_j$, $Z \gets Z G_j$\;

    Set $B_{j+2:n,j+1} \gets 0$\; \label{step:setBzero}
  }
  \Return $[A, B, Q, Z]$\;
\end{algorithm2e}

In the following analysis, we will assume that Step~\ref{step:solveB} of Algorithm~\ref{alg:basichtred} is carried out in a backward stable manner. To be more specific, letting $\tilde B^{(j)}$ denote the computed matrix $B$ in loop $j$ of Algorithm~\ref{alg:basichtred} after Step~\ref{step:applylefthh}, we assume that the computed vector $\hat x^{(j)}$ satisfies
\begin{equation} \label{eq:computedx}
 \big( \tilde B^{(j)} + \triangle_S^{(j)} \big) \hat x^{(j)} = e_1,\qquad \big\|\triangle_S^{(j)}\big\|_F = \mathcal O(\mathrm u)\cdot \big\| \tilde B^{(j)}\big\|_F.
\end{equation}
Here and in the following, the constant in $\mathcal O(\mathrm u)$ depends on $n$ only and grows mildly with $n$.
 
\begin{theorem} \label{thm:htredanalysis}
Assume that~\eqref{eq:computedx} holds and let $\hat H, \hat T$ denote the matrices returned by Algorithm~\ref{alg:basichtred} carried out in floating point arithmetic according to the standard model~\cite[Eq.~(2.4)]{Higham2002}. Then $\hat H$ is upper Hessenberg, $\hat T$ is upper triangular, and there are orthogonal matrices $Q,Z$ such that
\begin{equation} \label{eq:perturbationshtred}
 \hat H = Q^T (A+\triangle_A) Z, \quad \hat T = Q^T (B+\triangle_B) Z,
\end{equation}
where $\triangle_A, \triangle_B$ satisfy
\[
 \|\triangle A\|_F = \mathcal O(\mathrm u)\cdot \| A \|_F, \quad \|\triangle B\|_F = \mathcal O(\mathrm u)\cdot \| B \|_F.
\]
\end{theorem}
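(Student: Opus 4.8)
The plan is to carry out a loop-by-loop backward error analysis of the two-sided orthogonal reduction in Algorithm~\ref{alg:basichtred} and then telescope the per-loop errors. Fix a loop index $j$ and let $A^{(j-1)}, B^{(j-1)}$ denote the matrices computed at the start of loop $j$, with $A^{(0)} = A$ and $B^{(0)} = B$. For the left transformation I would invoke the standard backward error analysis of applying a Householder reflector in floating-point arithmetic~\cite[Ch.~19]{Higham2002}: the computed coefficients $\hat v_j,\hat\beta_j$ determine an exactly orthogonal matrix $\tilde F_j = I_j \oplus (I - \tilde\beta_j \tilde v_j \tilde v_j^{T})$ --- depending only on those coefficients and not on the operand --- such that the matrices $\bar A^{(j)}$ and $\tilde B^{(j)}$ computed after Step~\ref{step:applylefthh} (and Step~\ref{step:setAzero}) satisfy $\bar A^{(j)} = \tilde F_j(A^{(j-1)} + E^{L}_{A,j})$ and $\tilde B^{(j)} = \tilde F_j(B^{(j-1)} + E^{L}_{B,j})$ with $\|E^{L}_{A,j}\|_F = \mathcal O(\mathrm u)\|A^{(j-1)}\|_F$ and $\|E^{L}_{B,j}\|_F = \mathcal O(\mathrm u)\|B^{(j-1)}\|_F$. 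The explicit zeroing in Step~\ref{step:setAzero} touches only entries that $\tilde F_j$ was built to annihilate up to rounding, so those computed entries have magnitude $\mathcal O(\mathrm u)\|A^{(j-1)}\|_F$ and the zeroing can be absorbed into $E^{L}_{A,j}$, by the same argument as Step~3 in the proof of Lemma~\ref{lemma:analysisopposite}.

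For the right transformation the key point is that hypothesis~\eqref{eq:computedx} is exactly what Lemma~\ref{lemma:analysisopposite} requires, applied with its ``$B$'' equal to the trailing block $\tilde B^{(j)}_{j+1:n,j+1:n}$ and its ``$\triangle$'' equal to $\triangle_S^{(j)}$. Lemma~\ref{lemma:analysisopposite} then directly gives the backward error of forming $BG_j$ and performing the zeroing of Step~\ref{step:setBzero} on that block, with an exactly orthogonal reflector $\tilde G_j^{(\mathrm{sub})}$ and a perturbation of norm at most $\|\triangle_S^{(j)}\|_F + cn\mathrm u\,\|B\|_F = \mathcal O(\mathrm u)\|B\|_F$ (using also $\|\tilde B^{(j)}\|_F = \mathcal O(\|B\|_F)$). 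The update of rows $1:j$ of $B$ in columns $j+1:n$ and the update $A \gets A G_j$ are ordinary one-sided Householder applications whose backward error analysis (again~\cite[Ch.~19]{Higham2002}) produces the same exactly orthogonal factor $\tilde G_j = I_j \oplus \tilde G_j^{(\mathrm{sub})}$ together with perturbations of norm $\mathcal O(\mathrm u)\|B\|_F$ and $\mathcal O(\mathrm u)\|A\|_F$. Combining the two halves of loop $j$ and using orthogonality of $\tilde F_j$, $\tilde G_j$ to shift perturbations through them, I obtain $A^{(j)} = \tilde F_j(A^{(j-1)} + \Delta_{A,j})\tilde G_j$ and $B^{(j)} = \tilde F_j(B^{(j-1)} + \Delta_{B,j})\tilde G_j$ with $\|\Delta_{A,j}\|_F = \mathcal O(\mathrm u)\|A^{(j-1)}\|_F$ and $\|\Delta_{B,j}\|_F = \mathcal O(\mathrm u)\|B^{(j-1)}\|_F$.

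To finish, set $Q := \tilde F_1 \cdots \tilde F_{n-2}$ and $Z := \tilde G_1 \cdots \tilde G_{n-2}$, which are exactly orthogonal. Unrolling the recurrence and using that each $\tilde F_i$, $\tilde G_i$ is an involution, one gets $\hat H = Q^{T}(A + \triangle_A)Z$ and $\hat T = Q^{T}(B + \triangle_B)Z$ with $\triangle_A = \sum_{j=1}^{n-2}(\tilde F_1 \cdots \tilde F_{j-1})\Delta_{A,j}(\tilde G_{j-1}\cdots\tilde G_1)$ and analogously for $\triangle_B$. Since conjugation by orthogonal matrices preserves the Frobenius norm, $\|\triangle_A\|_F \le \sum_{j}\|\Delta_{A,j}\|_F$, and a short induction using $\|A^{(j)}\|_F \le (1 + \mathcal O(\mathrm u))\|A^{(j-1)}\|_F$ gives $\|A^{(j-1)}\|_F \le 2\|A\|_F$ provided $n\mathrm u \ll 1$; hence $\|\triangle_A\|_F = \mathcal O(\mathrm u)\|A\|_F$ with a constant growing only mildly with $n$, and similarly $\|\triangle_B\|_F = \mathcal O(\mathrm u)\|B\|_F$. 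That $\hat H$ is exactly upper Hessenberg and $\hat T$ exactly upper triangular follows from a parallel induction: the explicit zeroings are exact, and every later transformation (loop $j' > j$) mixes only rows $\ge j'+1 \ge j+2$ and columns $\ge j'+1$, so it preserves the zeros created in loop $j$.

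The step I expect to be the main obstacle is justifying that, within one loop, a \emph{single} exactly orthogonal factor serves as the backward-error factor for the $A$-update and the $B$-update simultaneously --- this is what makes the \emph{same} $Q$ and $Z$ appear in both identities of~\eqref{eq:perturbationshtred}. The cleanest way around it is to define $\tilde F_j$ and $\tilde G_j$ solely from the computed reflector coefficients (so that they are independent of the operand) and to push all operand-dependent rounding into the additive perturbations; failing that, one may note that two orthogonal matrices within $\mathcal O(\mathrm u)$ of each other are interchangeable at the cost of an $\mathcal O(\mathrm u)$-sized perturbation of $A$ or $B$. A lesser but still necessary technical point is the bookkeeping that prevents $\|A^{(j)}\|_F$ and $\|B^{(j)}\|_F$ from drifting upward over the $n-2$ loops, so that the per-loop $\mathcal O(\mathrm u)$ factors can be tied back to $\|A\|_F$ and $\|B\|_F$.
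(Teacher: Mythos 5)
Your proposal is correct and follows essentially the same route as the paper's proof: a loop-by-loop induction that invokes Higham's Lemma~19.2 for the left reflector applied to both $A$ and $B$, Lemma~\ref{lemma:analysisopposite} together with hypothesis~\eqref{eq:computedx} for the opposite reflector on the right, and an accumulation of orthogonally conjugated perturbations. Your explicit resolution of the ``single orthogonal factor for both operands'' issue (defining $\tilde F_j$, $\tilde G_j$ from the computed reflector coefficients only) and the norm-drift bookkeeping are exactly the points the paper handles implicitly.
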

\begin{proof}
 Let $\hat A^{(j)}$, $\hat B^{(j)}$ denote the computed matrices $A,B$ after $j$ loops of Algorithm~\ref{alg:basichtred} have been completed. For every $j = 0,1,\ldots,n-2$ we claim that there exist orthogonal matrices
 $Q^{(j)}, Z^{(j)}$ and perturbations $\triangle_A^{(j)}$, $\triangle_B^{(j)}$ satisfying
 $
 \|\triangle A^{(j)}\|_F = \mathcal O(\mathrm u)\cdot \| A \|_F$, $\|\triangle B^{(j)}\|_F = \mathcal O(\mathrm u)\cdot \| B \|_F$
 such that
 \begin{equation} \label{eq:inductionhypo}
 \hat A^{(j)} = \big( Q^{(j)} )^T \big(A+\triangle_A^{(j)}\big) Z^{(j)}, \quad \hat B^{(j)} = \big( Q^{(j)} )^T \big( B+\triangle_B^{(j)}\big) Z^{(j)}.
 \end{equation}
Because of $\hat H = \hat A^{(n-2)}$, $\hat T = \hat B^{(n-2)}$, this claim implies~\eqref{eq:perturbationshtred}.

We prove~\eqref{eq:inductionhypo} by induction. For $j = 0$, this relation is trivially satisfied. Suppose now that it holds for $j-1$. Considering the $j$th loop of Algorithm~\ref{alg:basichtred}, we first treat Step~\ref{step:lefthh} and let $F_j$ denote the exact Householder reflector constructed from the (perturbed) matrix $\hat A^{(j-1)}$. Letting $\tilde A^{(j)}$, $\tilde B^{(j)}$ denote the computed matrices $A,B$ after Step~\ref{step:applylefthh}, Lemma 19.2 from \cite{Higham2002} implies that there exist
$\triangle_{A,1}$, $\triangle_{B,1}$ with \begin{eqnarray*}
 &&\|\triangle_{A,1}\|_F = \mathcal O(\mathrm u)\cdot \|\hat A^{(j-1)} \|_F = \mathcal O(\mathrm u)\cdot \|A\|_F, \\
 &&\|\triangle_{B,1}\|_F = \mathcal O(\mathrm u)\cdot \|\hat B^{(j-1)} \|_F = \mathcal O(\mathrm u)\cdot \|B\|_F,
\end{eqnarray*}
such that
\begin{equation} \label{eq:pertafterleft}
 \tilde A^{(j)} = F_j \big(\hat A^{(j-1)} + \triangle_{A,1} \big),\quad 
  \tilde B^{(j)} = F_j \big(\hat B^{(j-1)} + \triangle_{B,1} \big)
\end{equation}
This statement remains true after Step~\ref{step:setAzero}; see the proof of Theorem 19.4 in~\cite{Higham2002}.
Analogously, we have after Step~\ref{step:setBzero} that
\begin{equation} \label{eq:pertafterright}
  \hat A^{(j)} = \big(\tilde A^{(j)} + \triangle_{A,2} \big) G_j ,\quad 
   \hat B^{(j)} = \big(\tilde B^{(j)} + \triangle_{B,2} \big) G_j,
\end{equation}
with 
$\|\triangle_{A,2}\|_F = \mathcal O(\mathrm u)\cdot \|A\|_F$, 
$\|\triangle_{B,2}\|_F = \mathcal O(\mathrm u)\cdot \|B\|_F$. While the existence of $\triangle_{A,2}$ follows again from \cite[Lemma 19.2]{Higham2002}, the existence of $\triangle_{B,2}$ follows from Lemma~\ref{lemma:analysisopposite}.

Combining~\eqref{eq:pertafterleft} and~\eqref{eq:pertafterright} gives
\[
 \hat A^{(j)} = F_j\big( \hat A^{(j-1)} + \triangle_{A,1} +F_j\triangle_{A,2} \big) G_j, \quad
 \hat B^{(j)} = F_j\big( \hat B^{(j-1)} + \triangle_{B,1} +F_j\triangle_{B,2} \big) G_j.
\]
Therefore~\eqref{eq:inductionhypo} holds for $j$ by setting
$Q^{(j)} := Q^{(j-1)}F_j$, 
$Z^{(j)} := Z^{(j-1)}G_j$, and
\begin{eqnarray*}
\triangle_A^{(j)} &:=& \triangle_A^{(j-1)} + Q^{(j-1)}\big( \hat A^{(j-1)} + \triangle_{A,1} +F_j\triangle_{A,2} \big) \big( Z^{(j-1)} \big)^T \\
\triangle_B^{(j)} &:=& \triangle_B^{(j-1)} + Q^{(j-1)}\big( \hat B^{(j-1)} + \triangle_{B,1} +F_j\triangle_{B,2} \big) \big( Z^{(j-1)} \big)^T.
\end{eqnarray*}

It remains to show that $\hat H$, $\hat T$ are upper Hessenberg/triangular, but this follows directly from Steps~\ref{step:setAzero} and~\ref{step:setBzero}, combined with the observations that subsequent operations do not modify entries that have been set to zero.
\end{proof}

Along the line of arguments given in~\cite[Pg.~360]{Higham2002}, the computed transformation matrices $\hat Q, \hat Z$ of Algorithm~\ref{alg:basichtred} satisfy $\|Q-\hat Q\|_F = \mathcal O(\mathrm u)$, $\|Z-\hat Z\|_F = \mathcal O(\mathrm u)$, with the matrices $Q,Z$ from Theorem~\ref{thm:htredanalysis}. On the one hand, this implies that $\hat Q, \hat Z$ are very close to being orthogonal. On the other hand, this implies small residuals:
\begin{eqnarray*}
  \| A -  \hat Q \hat H \hat Z^T \|_F &=&  \| A -  Q \hat H Z^T + (Q-\hat Q) \hat H \hat Z^T + Q\hat H (Z-\hat Z)^T  \|_F \\
  &\le & \|\triangle_A\|_F + \mathcal O(\mathrm u) \cdot \|\hat H\|_F = \mathcal O(\mathrm u) \cdot \|A\|_F,
\end{eqnarray*}
and analogously $\| B -  \hat Q \hat T \hat Z^T \|_F = \mathcal O(\mathrm u) \cdot \|B\|_F$.

\end{document}